\documentclass[11pt,letterpaper]{amsart}
\usepackage{amsmath}
\usepackage{amstext}
\usepackage{amssymb}
\usepackage{amsfonts}
\usepackage{enumerate}
\usepackage{amsthm}
\usepackage{mathrsfs}
\usepackage[all]{xy}

\newtheorem{theorem}{Theorem}[section]

\newtheorem{proposition}[theorem]{Proposition}
\newtheorem{corollary}[theorem]{Corollary}
\theoremstyle{definition}
\newtheorem{definition}[theorem]{Definition}
\newtheorem{remark}[theorem]{Remark}
\newtheorem{question}[theorem]{Question}
\newtheorem*{theorem*}{Theorem}

\newcommand{\n}[1]{ \left\|#1\right\| }
\newcommand{\bign}[1]{ \big\|#1\big\| }

\newcommand{\N}{{\mathbb{N}}}
\newcommand{\R}{{\mathbb{R}}}

\newcommand{\pair}[2]{{\langle #1, #2 \rangle}}

\newcommand{\fbs}{{\mathscr{F}}}

\DeclareMathOperator{\Lip}{Lip}

\DeclareMathOperator{\conv}{conv}

\numberwithin{equation}{section}

\title{Lipschitz $p$-convex and $q$-concave maps}
\author{Javier Alejandro Ch\'avez-Dom\'inguez}
\address{The University of Texas at Austin,
Mathematics Department,
2515 Speedway Stop C1200,
Austin, Texas 78712.}
\email{jachavezd@math.utexas.edu}
\thanks{Portions of this paper are part of the author's Ph.D. dissertation, prepared at Texas A\&{}M University under the supervision of William B. Johnson.
Partially supported by NSF grants DMS-1001321 and DMS-1400588.}

\begin{document}

\maketitle

\begin{abstract}
The notions of $p$-convexity and $q$-concavity are mostly known because of their importance as a tool in the study of isomorphic properties of Banach lattices, but they also play a role in several results involving linear maps between Banach spaces and Banach lattices.
In this paper we introduce Lipschitz versions of these concepts, dealing with maps between metric spaces and Banach lattices, and start by proving nonlinear versions of two well-known factorization theorems through $L_p$ spaces due to Maurey/Nikishin and Krivine.
We also show that a Lipschitz map from a metric space into a Banach lattice is Lipschitz $p$-convex if and only if its linearization is $p$-convex.
Furthermore, we elucidate why there is such a close relationship between the linear and nonlinear concepts by proving characterizations of Lipschitz $p$-convex and Lipschitz $q$-concave maps in terms of factorizations through $p$-convex and $q$-concave Banach lattices, respectively, in the spirit of the work of Raynaud and Tradacete.
\end{abstract}

\section{Introduction}

The classical examples of Banach spaces of functions or sequences (say, $L_p[0,1]$ or $c_0$) come naturally endowed with an order structure that is compatible with the norm, and this is often a useful tool.
Banach spaces with such ``extra'' order structure are called Banach lattices, and, paraphrasing Lindenstrauss and Tzafriri \cite{Lindenstrauss-Tzafriri-II}, this additional ingredient makes
the theory of Banach lattices in some regards simpler, cleaner and more complete than the theory for general Banach spaces.
For a Banach-space flavored study of the fundamentals of the theory of Banach lattices, the reader is referred to \cite{Lindenstrauss-Tzafriri-II}.
Because of the extra structure present in Banach lattices, the morphisms of interest are not just bounded linear maps but in fact maps that take advantage of the order.
Two of the most important classes of such maps are the $p$-convex and $q$-concave ones.
These two notions play an important role in the study of isomorphic properties of lattices, for example uniform convexity in Banach lattices \cite[Sec. 1.f]{Lindenstrauss-Tzafriri-II} and the study of rearrangement invariant function spaces \cite[Sec. 2.e]{Lindenstrauss-Tzafriri-II}.
Let us recall their definitions. Consider $1 \le p \le \infty$.
A linear map $T : X \to E$ from a Banach space $X$ to a Banach lattice $E$ is called \emph{$p$-convex} if there exists a constant $M < \infty$ such that for all $v_1, \dotsc, v_n \in X$
$$
\n{ \Big( \sum_{j=1}^n |Tv_j|^p \Big)^{1/p} }_E \le M \Big( \sum_{j=1}^n \n{v_j}_X^p \Big)^{1/p}, \quad\text{ if $1\le p < \infty$ }
$$
or
$$
\n{ \bigvee_{j=1}^n |Tv_j| }_E \le M \max_{1 \le j \le n} \n{v_j}_X, \quad\text{ if $p =\infty$. }
$$
The smallest such constant $M$ is denoted $M^{(p)}(T)$.
On the other hand, a linear map $S : E \to Y$ from a Banach lattice $E$ to a Banach space $Y$ is called $q$-concave if there exists a constant $M < \infty$ such that for all $v_1, \dotsc, v_n \in E$,
$$
\Big( \sum_{j=1}^n \n{Sv_j}_Y^q \Big)^{1/q} \le M \n{ \Big( \sum_{j=1}^n |v_j|^q \Big)^{1/q} }_E , \quad\text{ if $1\le q < \infty$ }
$$
or
$$
\max_{1 \le j \le n} \n{Sv_j}_Y \le \n{ \bigvee_{j=1}^n |v_j| }_E, \quad\text{ if $q =\infty$. }
$$
The smallest such constant $M$ is denoted $M_{(q)}(S)$.
The constants $M^{(p)}(T)$ and $M_{(q)}(S)$ are called the \emph{$p$-convexity} constant of $T$, respectively \emph{$q$-concavity} constant of $S$.
When we say that a Banach lattice $E$ is $p$-convex (resp. $q$-concave) we mean that the identity map $I_E : E \to E$ is $p$-convex (resp. $q$-concave).

In this paper we develop nonlinear counterparts of these two concepts,
considering Lipschitz maps between a metric space and a Banach lattice, 
and show how some of
the elementary results from the theory of $p$-convex and $q$-concave maps admit generalizations to the Lipschitz setting.
The rest of the paper is organized as follows.
In Section \ref{sec-preliminaries} we recall some preliminaries that will be needed in the sequence.
In Section \ref{sec-lip-p-convex} we prove a nonlinear version of the Maurey/Nikishin factorization theorem, which suggests our definition of Lipschitz $p$-convex maps.
We then prove that a map is Lipschitz $p$-convex if and only if its canonical linearization is $p$-convex.
In Section \ref{sec-lip-q-concave} we introduce the concept of Lipschitz $q$-concave maps, and prove that the composition of a Lipschitz $p$-convex map followed by a Lipschitz $p$-concave one factors through an $L_p$ space, a nonlinear version of a theorem due to Krivine.
Finally, Section \ref{sec-factorization-theorems} makes clear why there is a close parallel between the linear and nonlinear situations. This is done by proving characterizations of Lipschitz $p$-convex and Lipschitz $q$-concave maps in terms of factorizations through $p$-convex and $q$-concave Banach lattices, respectively.
This is a nonlinear generalization of the work of Raynaud and Tradacete \cite{Raynaud-Tradacete}.

\section{Notation and preliminaries}\label{sec-preliminaries}

For the basic background and notation on Banach lattices, the reader is directed to \cite{Lindenstrauss-Tzafriri-II}.
We use the convention of having \emph{pointed} metric spaces, i.e. with a designated special point always denoted by $0$.
As customary, if $E$ is a Banach space then $B_E$ denotes the closed unit ball of $E$ and $E^*$ its linear dual. If $X$ is a metric space,
$\Lip_0(X,E)$ denotes the Banach space of Lipschitz functions $T : X \to E$ such that $T(0)=0$, with addition defined pointwise and the Lipschitz constant $\Lip(T)$ as the norm of $T$.
We use the shorthand $X^\# := \Lip_0(X,\R)$.

Let us recall the definition and basic properties of the space of Arens and Eells \cite{Arens-Eells-56}.
We follow the presentation in \cite{Weaver}.
A \emph{molecule} on a metric space $X$ is a finitely supported function $m:X\rightarrow \R$ such that $\sum_{x\in X} m(x) =0$.
For $x, x' \in X$ we denote by $m_{xx'}$ the molecule $\chi_{\{x\}}-\chi_{\{x'\}}$.
The simplest molecules, i.e. those of the form $a m_{xx'}$ with $x,x'\in X$ and $a$ a real number are called 
\emph{atoms}.
It is easy to show that every molecule can be expressed as a sum of atoms (for instance, by induction on the cardinality of the support of the molecule).
The Arens-Eells space of $X$, denoted $\fbs(X)$, is the completion of the space of molecules with the norm
\begin{equation}\label{eqn-def-arens-eells-norm}
\n{m}_{\fbs} := \inf \bigg\{ \sum_{j=1}^n |a_j| d(x_j,x'_j) \; : \; m = \sum_{j=1}^n a_j m_{x_jx_j'} \bigg\}.
\end{equation}
The fundamental properties of the Arens-Eells space are summarized in the following theorem \cite{Arens-Eells-56}, \cite[pp. 39-41]{Weaver}.

\begin{theorem}\label{thm-Arens-Eells}
\begin{enumerate}[(i)]
\item
$\n{\cdot}_{\fbs}$ is a norm on the vector space of molecules on $X$.
\item 
The dual of $\fbs(X)$ is (canonically) isometrically isomorphic to $X^\#$. Moreover, on bounded subsets of $X^\#$ the weak$^*$ topology coincides with the topology of pointwise convergence.
\item\label{universal-property}
The map $\iota : x \mapsto m_{x0}$ is an isometric embedding of $X$ into $\fbs(X)$. Moreover,
for any Banach space $E$ and any Lipschitz map $T : X \to E$ with $T(0)=0$ there is a unique linear map $\hat{T} : \fbs(X) \to E$ such that $\hat{T} \circ \iota = T$. Furthermore, $\bign{\hat{T}} = \Lip(T)$.
\end{enumerate}
\end{theorem}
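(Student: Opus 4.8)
The plan is to base everything on a single bilinear pairing between molecules and $X^\#$. For a molecule $m$ and $f \in X^\#$ I would set $\pair{m}{f} := \sum_{x \in X} m(x) f(x)$, a finite sum that depends only on $m$ as a function and not on any chosen representation. The key computation is that if $m = \sum_{j=1}^n a_j m_{x_j x_j'}$ then $\pair{m}{f} = \sum_{j=1}^n a_j\big(f(x_j) - f(x_j')\big)$, so that
\[
|\pair{m}{f}| \le \Lip(f) \sum_{j=1}^n |a_j| d(x_j, x_j').
\]
Taking the infimum over representations yields the fundamental estimate $|\pair{m}{f}| \le \Lip(f)\,\n{m}_\fbs$, and I would derive each of (i)--(iii) by exploiting this inequality in one direction or the other.

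For (i), the triangle inequality and homogeneity of $\n{\cdot}_\fbs$ follow routinely from its definition as an infimum over representations (concatenate representations for a sum, rescale for a scalar multiple), so the only substantive point is positive definiteness. Given $m \ne 0$, I would pick a point $p$ in the (finite) support of $m$, choose $r>0$ strictly smaller than the distance from $p$ to every other support point, and take $f(x) = \max\{0,\, r - d(x,p)\}$, normalized to vanish at $0$; since $\sum_x m(x)=0$ this normalization does not affect the pairing, and $\pair{m}{f} = r\,m(p) \ne 0$. The fundamental estimate then forces $\n{m}_\fbs > 0$.

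Part (ii) is where the two directions meet. On one hand each $f \in X^\#$ induces, via $m \mapsto \pair{m}{f}$, a bounded functional on $\fbs(X)$ of norm at most $\Lip(f)$. On the other hand, given a bounded functional $\phi$, I would define $f(x) := \phi(m_{x0})$; since $\phi(m_{xx'}) = f(x) - f(x')$ and $\n{m_{xx'}}_\fbs \le d(x,x')$, one gets $f \in X^\#$ with $\Lip(f) \le \n{\phi}$, and $\phi$ agrees with pairing against $f$ on atoms, hence on the dense subspace of molecules. Matching the two norm inequalities gives the isometry. For the topological claim I would use that on bounded sets weak$^*$ convergence need only be tested on the dense span of the atoms $m_{x0}$, and $\pair{m_{x0}}{f} = f(x)$, which is exactly pointwise convergence.

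Finally, for (iii) the bound $\n{m_{xx'}}_\fbs \le d(x,x')$ is immediate from the definition, while the reverse follows by pairing $m_{xx'}$ against the $1$-Lipschitz function $g(y) = d(y,x') - d(0,x')$, for which $\pair{m_{xx'}}{g} = d(x,x')$; hence $\iota$ is isometric. For the universal property, given Lipschitz $T : X \to E$ with $T(0)=0$ I would define $\hat T(m) = \sum_{x\in X} m(x)\,T(x)$ on molecules; the same estimate as above gives $\n{\hat T(m)}_E \le \Lip(T)\,\n{m}_\fbs$, so $\hat T$ extends boundedly with $\n{\hat T} \le \Lip(T)$ and clearly satisfies $\hat T \circ \iota = T$, with uniqueness from density of molecules. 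The reverse bound $\n{\hat T} \ge \Lip(T)$ comes from $\n{T(x)-T(x')}_E = \n{\hat T(m_{xx'})}_E \le \n{\hat T}\,d(x,x')$. I expect the genuine obstacle to be isolating the fundamental estimate and, in (ii), checking that \emph{every} functional arises from a Lipschitz map with exactly matching norm; once that duality is in place the remainder is bookkeeping.
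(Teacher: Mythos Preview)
Your argument is correct and is essentially the standard proof of these facts. Note, however, that the paper does not actually prove Theorem~\ref{thm-Arens-Eells}: it states the result as background and cites \cite{Arens-Eells-56} and \cite[pp.~39--41]{Weaver} for the proof, so there is no in-paper argument to compare against. The route you take---set up the pairing $\pair{m}{f}$, derive the fundamental estimate $|\pair{m}{f}|\le \Lip(f)\,\n{m}_\fbs$, use it to get positive definiteness via a bump function, to identify $\fbs(X)^*$ with $X^\#$, and to construct and bound $\hat T$---is exactly the approach in Weaver's book, so your proposal matches the cited source.
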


Because of the universal property \eqref{universal-property}, the space $\fbs(X)$ is also called the free Lipschitz space of $X$, or simply the free space of $X$.
These spaces have been recently used as tools in nonlinear Banach space theory, see \cite{Godefroy-Kalton-03}, \cite{Kalton-04} and the survey \cite{Godefroy-Lancien-Zizler}.

\section{Lipschitz $p$-convex maps}\label{sec-lip-p-convex}

The concept of Lipschitz $p$-convex map was inspired by our discovery of the following non-linear version of the Maurey/Nikishin factorization theorem.
The proof presented here follows very closely the presentation of Albiac and Kalton \cite[Thm. 7.1.2]{Albiac-Kalton}.

\begin{theorem}\label{thm-nonlinear-nikishin}
Let $X$ be a metric space and $\mu$ be a $\sigma$-finite measure on some measurable space $(\Omega, \Sigma)$ and $1 \le q < p <\infty$. Suppose that $T : X \to L_q(\mu)$ is a Lipschitz map and $C > 0$.
The following are equivalent:
\begin{enumerate}[(a)]
\item
There exists a density function $h$ on $\Omega$ such that
\begin{equation}\label{eqn-7-1}
\left[ \int_{\{h>0\}} \left|\frac{Tx-Tx'}{h^{1/q}}\right|^ph\,d\mu \right]^{1/p} \le C d(x,x'), \qquad x,x' \in X
\end{equation}
and
\begin{equation}\label{eqn-7-2}
\mu\{ |Tx-Tx'| >0, h=0 \} = 0 \qquad x, x' \in X.
\end{equation}
\item For every $x_1,\dots,x_n,x_1',\dots,x_n' \in X$ and $\lambda_1,\dots,\lambda_n \ge 0$,
\begin{equation}\label{eqn-7-3}
\n{ \bigg(\sum_{j=1}^n \lambda_j|Tx_j-Tx_j'|^p\bigg)^{1/p} }_{L_q(\mu)} \le C \bigg(\sum_{j=1}^n \lambda_j d(x_j,x_j')^p\bigg)^{1/p}
\end{equation}
\end{enumerate}
\end{theorem}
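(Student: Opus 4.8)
The plan is to treat the two implications separately. The substantive content lies in (b)$\Rightarrow$(a); the reverse is a short convexity computation. The only genuinely nonlinear feature is that the differences $Tx-Tx'$ and the distances $d(x,x')$ play the roles that the images $Tx$ and norms $\n{x}$ play in the classical Maurey/Nikishin theorem, so the scheme runs parallel to \cite[Thm. 7.1.2]{Albiac-Kalton}. Throughout I write $r=p/q>1$ and, for a pair $x,x'$, abbreviate $f_{x,x'}=Tx-Tx'\in L_q(\mu)$.

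For (a)$\Rightarrow$(b) I would start from a density $h$ satisfying \eqref{eqn-7-1}--\eqref{eqn-7-2} and, on the set $\{h>0\}$, use the identity $\sum_j\lambda_j|f_{x_j,x_j'}|^p=h^{p/q}\sum_j\lambda_j\,|f_{x_j,x_j'}h^{-1/q}|^p$. Raising to the power $q/p$ produces a factor $h$, and after discarding $\{h=0\}$ via \eqref{eqn-7-2} the quantity $\n{(\sum_j\lambda_j|f_{x_j,x_j'}|^p)^{1/p}}_{L_q(\mu)}^{q}$ becomes $\int(\sum_j\lambda_j|f_{x_j,x_j'}h^{-1/q}|^p)^{q/p}\,h\,d\mu$. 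Since $q/p\le 1$ the function $t\mapsto t^{q/p}$ is concave, so Jensen's inequality against the probability measure $h\,d\mu$ pulls the power outside the integral; interchanging sum and integral and applying the pointwise estimate \eqref{eqn-7-1} termwise then gives \eqref{eqn-7-3}. I anticipate no difficulty here.

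For (b)$\Rightarrow$(a) the idea is to manufacture $h$ using Ky Fan's minimax lemma. After reducing to a finite measure (legitimate because $\mu$ is $\sigma$-finite), I would work on the set $\mathcal H$ of probability densities and, for each finite configuration $\sigma=(\lambda_j,x_j,x_j')_{j=1}^n$, introduce
\[
\Phi_\sigma(h)=\sum_{j=1}^n\lambda_j\int |f_{x_j,x_j'}|^p\,h^{1-r}\,d\mu\; -\; C^p\sum_{j=1}^n\lambda_j\,d(x_j,x_j')^p,
\]
adopting the convention that the integrand is $+\infty$ on $\{h=0,\ |f_{x_j,x_j'}|>0\}$. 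Because $1-r<0$, the map $t\mapsto t^{1-r}$ is convex, so every $\Phi_\sigma$ is convex and lower semicontinuous, and the family $\{\Phi_\sigma\}$ is closed under convex combinations, hence concave-like in the sense Ky Fan requires. The convention is chosen so that a single inequality $\Phi_{x,x'}(h)\le 0$ encodes both \eqref{eqn-7-1} and the support condition \eqref{eqn-7-2}, since finiteness of the integral forces $f_{x,x'}$ to vanish almost everywhere on $\{h=0\}$. The Ky Fan hypothesis is verified by the key computation: for fixed $\sigma$, minimizing $\int F\,h^{1-r}\,d\mu$ over densities, with $F=\sum_j\lambda_j|f_{x_j,x_j'}|^p$, is a H\"older-type optimization whose minimizer is proportional to $F^{1/r}$ and whose minimum equals $\big(\int F^{1/r}\,d\mu\big)^{r}=\n{(\sum_j\lambda_j|f_{x_j,x_j'}|^p)^{1/p}}_{L_q(\mu)}^{p}$, which \eqref{eqn-7-3} bounds by $C^p\sum_j\lambda_j d(x_j,x_j')^p$. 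Thus $\inf_{\mathcal H}\Phi_\sigma\le 0$ for every $\sigma$, and Ky Fan's lemma yields a single $h_0\in\mathcal H$ with $\Phi_\sigma(h_0)\le 0$ for all $\sigma$; specializing to one-term configurations gives exactly (a).

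I expect the main obstacle to be the topological hypotheses needed to apply Ky Fan --- compactness of the density set and lower semicontinuity of the $\Phi_\sigma$. The natural remedy is to realize $\mathcal H$ inside the weak$^*$-compact set of positive measures of total mass one in $L_\infty(\mu)^*$. The delicate point is that the minimax point there may a priori be merely finitely additive; however, since the recession of $t\mapsto t^{1-r}$ vanishes, the singular part contributes nothing to $\Phi_\sigma$, so passing to its absolutely continuous part and renormalizing --- which only decreases $\Phi_\sigma$, as $1-r<0$ --- recovers a genuine density. Making this boundary analysis precise, together with checking lower semicontinuity of the extended, $+\infty$-valued functionals, is where the real care will be required.
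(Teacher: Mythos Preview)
Your (a)$\Rightarrow$(b) argument is essentially the paper's, just phrased through Jensen's inequality rather than the monotonicity of $L_p$-norms on a probability space.

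For (b)$\Rightarrow$(a) you take a genuinely different route. The paper, following Albiac--Kalton, forms the set $W_0 = \big\{f : 0 \le f \le (\sum_j \lambda_j |Tx_j-Tx_j'|^p)^{q/p},\ \sum_j \lambda_j d(x_j,x_j')^p \le 1\big\}$, shows that $W_0^{p/q}$ is convex, and proves directly that the $L_1$-closure $W$ is weakly compact by an equi-integrability argument (exploiting the convexity of $W^{p/q}$ to bound $\int \max(f_1,\dots,f_N)\,d\mu$). It then picks $h \in W$ attaining $\sup_{f\in W} \int f\,d\mu = 1$, and reads off \eqref{eqn-7-1}--\eqref{eqn-7-2} by perturbing $h$ to $\big((h^{p/q} + \tau f^{p/q})/(1+\tau)\big)^{q/p}$ and letting $\tau \to 0^+$. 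Your Ky Fan scheme with the H\"older identity $\min_{h}\int F\,h^{1-r}\,d\mu = \big(\int F^{1/r}\,d\mu\big)^{r}$ is conceptually cleaner --- that single computation replaces the paper's variational analysis --- but it pushes the entire difficulty into the compactness hypothesis of the minimax lemma, which you rightly flag as the crux. The paper's equi-integrability argument is elementary and self-contained, whereas your fix via finitely additive measures and the vanishing recession of $t\mapsto t^{1-r}$ is correct in outline (this is the standard relaxation picture for convex integral functionals extended to measures) but making the weak$^*$ lower semicontinuity precise in $L_\infty(\mu)^*$, together with your $+\infty$ convention on $\{h=0\}$, calls for more external machinery than the paper's direct route.
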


As in the linear case, condition $(a)$ is equivalent to the existence of a factorization diagram
\begin{equation}\label{eqn-factorization-maurey-nikishin-nonlinear}
	\xymatrix{
	X \ar[r]^{T}\ar[d]^S &L_{q}(\mu) \\
	L_p(hd\mu) \ar[r]^{i_{p,q}} & L_q(hd\mu) \ar[u]^{j}
	}	
\end{equation}
where $S$ is a Lipschitz function with $\Lip(S)\le C$ and the isometry $j$ has, in fact, range $L_q(A,\mu)$ where $A = \{ h > 0\}$.
Also, if we consider $X$ as a pointed metric space with a designated point $0\in X$ and impose the condition $T(0)=0$, condition \eqref{eqn-7-2} can be replaced by the somewhat simpler one
$$
\mu\{ |Tx| > 0, h=0 \} = 0 \qquad x \in X\setminus\{0\}.
$$

\begin{proof}[Proof of Theorem \ref{thm-nonlinear-nikishin}]
Without loss of generality, via a first change of density, we may assume that $\mu$ is in fact a probability measure.

$(a) \Rightarrow (b)$
Let $x_1,\dots,x_n,x_1',\dots,x_n' \in X$ and $\lambda_1,\dots,\lambda_n \ge 0$. Since $\mu$ is a probability space and $q<p$, the $L_q(hd\mu)$ norm is smaller than the $L_p(hd\mu)$ norm and thus
\begin{align*}
\n{ \bigg(\sum_{j=1}^n \lambda_j|Tx_j-Tx_j'|^p\bigg)^{1/p} }_{L_q(\mu)} &= \n{ \bigg(\sum_{j=1}^n \lambda_j|Sx_j-Sx_j'|^p\bigg)^{1/p} }_{L_q(hd\mu)} \\
&\le \n{ \bigg(\sum_{j=1}^n \lambda_j|Sx_j-Sx_j'|^p\bigg)^{1/p} }_{L_p(hd\mu)} \\
&= \bigg( \sum_{j=1}^n \lambda_j\n{Sx_j-Sx_j'}_{L_p(hd\mu)}^p\bigg)^{1/p}\\
&\le C \bigg( \sum_{j=1}^n \lambda_jd(x_j,x_j')^p\bigg)^{1/p}.
\end{align*}

$(b) \Rightarrow (a)$
Assume $C$ is the best constant in \eqref{eqn-7-3}. Without loss of generality, we can assume $C=1$ (by considering $T/C$ instead of $T$).

Let
$$
W_0 = \left\{ f : \Omega \to \R \,:\, 0 \le f \le  \left(\sum_{j=1}^n \lambda_j|Tx_j-Tx_j'|^p\right)^{q/p}, \;\; \sum_{j=1}^n \lambda_j d(x_j,x_j')^p \le 1 \right\},
$$
and let $W$ be the closure of $W_0$ in $L_1(\mu)$. 
Since $1$ is the best constant in \eqref{eqn-7-3},
\begin{equation}\label{eqn-sup}
\sup \left\{ \int_\Omega f \,d\mu : f\in W_0 \right\} = \sup \left\{ \int_\Omega f \,d\mu : f\in W \right\}  =1.
\end{equation}

CLAIM 1: $W^{p/q}$ is a convex set. 

It suffices to show that $W_0^{p/q}$ is a convex set. Let $f,g \in W_0$ and $a,b \ge 0$ with $a+b=1$.
From the definition of $W_0$, there exist $x_1,\dots,x_n,x_1',\dots,x_n' \in X$ and $\lambda_1,\dots,\lambda_n \ge 0$ such that
$$
0 \le f \le \left(\sum_{j=1}^n \lambda_j|Tx_j-Tx_j'|^p\right)^{q/p} \quad \text{and} \quad \sum_{j=1}^n \lambda_j d(x_j,x_j')^p \le 1,
$$
and there also exist 
$y_1,\dots,y_m,y_1',\dots,y_m' \in X$ and $\sigma_1,\dots,\sigma_m \ge 0$ such that
$$
0 \le g \le \left(\sum_{k=1}^m \sigma_k|Ty_k-Ty_k'|^p\right)^{q/p} \quad \text{and} \quad \sum_{k=1}^m \sigma_k d(y_k,y_k')^p \le 1.
$$
Now
\begin{align*}
0 \le \big( af^{p/q}+b^{p/q} \big)^{q/p} &\le \left( a\sum_{j=1}^n \lambda_j|Tx_j-Tx_j'|^p + b\sum_{k=1}^m \sigma_k|Ty_k-Ty_k'|^p \right)^{q/p} \\
&\le \left( \sum_{j=1}^n a\lambda_j|Tx_j-Tx_j'|^p + \sum_{k=1}^m b\sigma_k|Ty_k-Ty_k'|^p \right)^{q/p},
\end{align*}
and since
$$
\sum_{j=1}^n a\lambda_jd(x_j,x_j')^p + \sum_{k=1}^m b\sigma_kd(y_k,y_k')^p \le a + b =1,
$$
we conclude that $\big( af^{p/q}+b^{p/q} \big)^{q/p} \in W_0$ and therefore $W_0^{p/q}$ is a convex set.

CLAIM 2: There exists $h \in W$ such that $\int hd\mu = 1$.

Since $\mu$ is a probability measure, the map $f \mapsto \int fd\mu$ is a continuous linear functional and therefore it will suffice to show that $W$ is a weakly compact set in $L_1(\mu)$.
By definition, $W$ is norm closed. Moreover, it is convex
so $W$ is weakly closed.

In order to show that $W$ is weakly compact, all that is left to check is equi-integrability. Suppose that $W$ is not equi-integrable. Then there exist $\delta>0$, a sequence $(E_n)_{n=1}^\infty$ of disjoint subsets of $\Omega $ and a sequence $(f_n)_{n=1}^\infty$ in $W$ such that for all $n\in\N$, 
$$
\int_{E_n} f_n\,d\mu >\delta.
$$
Thus given any $N\in \N$, since the sets $(E_n)$ are disjoint,
\begin{align*}
N\delta &\le \sum_{n=1}^N \int_{E_n} f_n\,d\mu \le \int \max\{f_1,\dots,f_n\}\,d\mu \\
&\le N^{q/p} \int \left( \sum_{n=1}^N \frac{1}{N} f_n^{p/q} \right)^{q/p}\,d\mu.
\end{align*}
By Claim 1, this last integral is at most $1$, so $\delta \le N^{q/p-1}$. Since $q/p<1$, this is a contradiction for large enough $N$. 

Now, let $f \in W$ and $\tau>0$. By Claim 1,
$$
\frac{1}{1+\tau}\big( h^{p/q} + \tau f^{p/q} \big) \in W^{p/q},
$$
so from \eqref{eqn-sup}
\begin{equation}\label{eqn-variation}
(1+\tau)^{q/p} \ge \int \big( h^{p/q} + \tau f^{p/q} \big) ^{q/p} \,d\mu.
\end{equation}
But
\begin{multline*}
\int \big( h^{p/q} + \tau f^{p/q} \big) ^{q/p} \,d\mu. \ge \int_{\{h>0\}} h\,d\mu + \tau^{q/p}\int_{\{h=0\}} f\,d\mu
= 1 + \tau^{q/p} \int_{\{h=0\}} f\,d\mu.
\end{multline*}
so, since $0<q/p<1$,
$$
0 \le \int_{\{h=0\}} f\,d\mu \le \frac{(1+\tau)^{q/p}-1}{\tau^{q/p}} \underset{\tau\rightarrow 0^+}{\longrightarrow} 0,
$$
from where we get \eqref{eqn-7-2}.
by considering $f$ of the form $|Tx-Tx'|/d(x,x')$.

From \eqref{eqn-variation},
\begin{equation}\label{eqn-fatou}
\frac{(1+\tau)^{q/p}-1}{\tau} \ge \int_{\{h>0\}} \left[ \frac{ \big( 1+\tau(f/h)^{p/q} \big)^{q/p} -1 }{\tau} \right] hd\mu.
\end{equation}
Letting $\tau\rightarrow 0^+$, the left-hand side of \eqref{eqn-fatou} converges to $q/p$.
By Fatou's lemma, the right-hand side is at least
$$
\frac{q}{p} \int_{\{h>0\}} \left( \frac{f}{h} \right)^{p/q} hd\mu.
$$
By considering once more $f$ of the form $|Tx-Tx'|/d(x,x')$ we get \eqref{eqn-7-1}.
\end{proof}

Note that the equivalent conditions in Theorem \ref{thm-nonlinear-nikishin} are in fact equivalent to the $p$-convexity of the linear extension $\hat{T} : \fbs(X) \to L_q(\mu)$ of $T : X \to L_q(\mu)$. If we have a Lipschitz factorization as in \eqref{eqn-factorization-maurey-nikishin-nonlinear}, that immediately gives a linear factorization
$$
\xymatrix{
\fbs(X) \ar[r]^{\hat{T}}\ar[d]^{\hat{S}} &L_{q}(\mu) \\
L_p(hd\mu) \ar[r]^{i_{p,q}} & L_q(hd\mu) \ar[u]^{j}
}
$$
and vice versa; such a linear factorization of $\hat{T}$ is equivalent to the $p$-convexity of $\hat{T}$ by the classical Maurey/Nikishin theorem. 
Hence, the following definition is a natural one:

\begin{definition}
Let $1 \le p \le \infty$.
Let $X$ be a metric space and $E$ a Banach lattice. A Lipschitz map $T : X \to E$ is called \emph{Lipschitz $p$-convex} if there exists a constant $C\ge 0$ 
for any $x_j,x_j' \in X$ and $\lambda_j \ge 0$,
$$
\n{ \bigg(\sum_{j=1}^n \lambda_j|Tx_j-Tx_j'|^p\bigg)^{1/p} }_E \le C \bigg(\sum_{j=1}^n  \lambda_jd(x_j,x_j')^p\bigg)^{1/p},
$$
(with the obvious adjustment if $p=\infty$).
The smallest such constant $C$ is called the \emph{Lipschitz $p$-convexity constant} of $T$ and is denoted by $M^{(p)}_{\Lip}(T)$.
\end{definition}

Note that this is a generalization of the linear concept: a linear map $T : X \to E$ from a Banach space $X$ to a Banach lattice $E$ is $p$-convex if and only if it is Lipschitz $p$-convex, and with the same constant. 

One could be tempted to follow the footsteps of \cite{Farmer-Johnson-09} and ``get rid of the constants'' in the Lipschitz $p$-convexity condition; that is, checking that it suffices to have the condition with all $\lambda_j$ being equal to 1.
For that to be true we need extra continuity conditions on the lattice, so we do not take such simplification as the definition of Lipschitz $p$-convexity.

The situation of Theorem \ref{thm-nonlinear-nikishin}, where a Lipschitz map turned out to be Lipschitz $p$-convex if and only if its linearization is $p$-convex, is in fact the general case as demonstrated below.

\begin{theorem}\label{theorem-lipschitz-convexity}
Let $X$ be a metric space and $E$ a Banach lattice. A Lipschitz map $T : X \to E$ is Lipschitz $p$-convex if and only if $\hat{T} : \fbs(X) \to E$ is $p$-convex. Moreover, in this case the $p$-convexity constants are the same.
\end{theorem}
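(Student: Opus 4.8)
The plan is to establish the two estimates $M^{(p)}_{\Lip}(T)\le M^{(p)}(\hat T)$ and $M^{(p)}(\hat T)\le M^{(p)}_{\Lip}(T)$ separately, which together give equality of the constants. Everything hinges on two identities about the free space. Since $\hat T\circ\iota=T$ and $\iota(x)-\iota(x')=m_{xx'}$, we have $\hat T(m_{xx'})=Tx-Tx'$; and since $\iota$ is an isometric embedding (the universal-property part of Theorem \ref{thm-Arens-Eells}), an atom has Arens--Eells norm $\n{a\,m_{xx'}}_{\fbs}=|a|\,d(x,x')$. Thus the Lipschitz $p$-convexity inequality for $T$ is precisely the linear $p$-convexity inequality for $\hat T$ evaluated at the atoms $v_j=\lambda_j^{1/p}m_{x_jx_j'}$, which instantly yields the easy direction $M^{(p)}_{\Lip}(T)\le M^{(p)}(\hat T)$.

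For the converse I would assume $T$ is Lipschitz $p$-convex with constant $C=M^{(p)}_{\Lip}(T)$ and produce the $p$-convexity inequality for $\hat T$ at arbitrary $v_1,\dots,v_n\in\fbs(X)$. By norm-continuity of $\hat T$, of the lattice operations, and of the homogeneous functional calculus, it suffices to treat the dense case where each $v_j$ is a molecule. I would fix $\eps>0$ and, for each $j$, choose a nearly optimal atomic representation $v_j=\sum_k a_{jk}m_{x_{jk}x_{jk}'}$, meaning $s_j:=\sum_k|a_{jk}|\,d(x_{jk},x_{jk}')\le(1+\eps)\n{v_j}_{\fbs}$ (the case $s_j=0$, i.e.\ $v_j=0$, being trivial).

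The crux is to bound $|\hat T v_j|^p$ by a weighted sum of the atomic quantities $|Tx_{jk}-Tx_{jk}'|^p$ whose weights reassemble correctly. Writing $b_{jk}:=|a_{jk}(Tx_{jk}-Tx_{jk}')|$ and choosing the convex weights $w_{jk}:=|a_{jk}|\,d(x_{jk},x_{jk}')/s_j$, the triangle inequality gives $|\hat T v_j|\le\sum_k b_{jk}$, and the lattice version of the scalar convexity estimate $(\sum_k b_k)^p\le\sum_k w_k^{1-p}b_k^p$ (valid for positive elements via the functional calculus, since $p\ge 1$) yields $|\hat T v_j|^p\le\sum_k\lambda_{jk}|Tx_{jk}-Tx_{jk}'|^p$ with $\lambda_{jk}:=s_j^{p-1}|a_{jk}|\,d(x_{jk},x_{jk}')^{1-p}$. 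Summing over $j$ and applying the Lipschitz $p$-convexity inequality to the doubly indexed family $\{(x_{jk},x_{jk}')\}$ with weights $\lambda_{jk}$ gives $\n{(\sum_j|\hat T v_j|^p)^{1/p}}_E\le C(\sum_{j,k}\lambda_{jk}\,d(x_{jk},x_{jk}')^p)^{1/p}$. The whole point of the choice of $\lambda_{jk}$ is that $\lambda_{jk}\,d(x_{jk},x_{jk}')^p=s_j^{p-1}|a_{jk}|\,d(x_{jk},x_{jk}')$, so the inner sum over $k$ telescopes to $s_j^p$ and the right-hand side collapses to $\sum_j s_j^p\le(1+\eps)^p\sum_j\n{v_j}^p$. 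Letting $\eps\to 0$ gives $M^{(p)}(\hat T)\le C$.

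The main obstacle is exactly this distribution step: the convexity estimate must be applied with weights proportional to the atomic masses $|a_{jk}|\,d(x_{jk},x_{jk}')$, so that after invoking Lipschitz $p$-convexity the exponents on $d(x_{jk},x_{jk}')$ recombine to return $\n{v_j}_{\fbs}^p$ rather than a larger quantity; a less careful weighting would lose the sharp constant. I would treat $p=\infty$ separately by the simpler variant in which, writing $G:=\bigvee_{(j,k)}|Tx_{jk}-Tx_{jk}'|/d(x_{jk},x_{jk}')$, one uses $|\hat T v_j|\le\sum_k|a_{jk}|\,d(x_{jk},x_{jk}')\,G\le s_j\,G$ together with $\n{G}_E\le C$ (Lipschitz $\infty$-convexity at normalized atoms), so that $\n{\bigvee_j|\hat T v_j|}_E\le(\max_j s_j)\,C\le C(1+\eps)\max_j\n{v_j}_{\fbs}$.
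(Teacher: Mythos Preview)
Your proof is correct and follows a genuinely different route from the paper's. The paper establishes the nontrivial inequality $M^{(p)}(\hat T)\le M^{(p)}_{\Lip}(T)$ by a duality argument: it shows that $\hat T^*:E^*\to X^\#$ is $p'$-concave with constant at most $M^{(p)}_{\Lip}(T)$ (by estimating $\big(\sum_j|\langle\varphi_j^*,Tx_j-Tx_j'\rangle/d(x_j,x_j')|^{p'}\big)^{1/p'}$ via the pairing inequality \cite[Prop.~1.d.2(iii)]{Lindenstrauss-Tzafriri-II} together with Lipschitz $p$-convexity), and then invokes the standard duality \cite[Prop.~1.d.4]{Lindenstrauss-Tzafriri-II} between $p$-convexity of $\hat T$ and $p'$-concavity of $\hat T^*$. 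Your approach, by contrast, is entirely direct: you decompose each molecule into atoms and use a Jensen-type inequality in the lattice with weights $w_{jk}$ proportional to the atomic masses $|a_{jk}|\,d(x_{jk},x_{jk}')$, so that after applying Lipschitz $p$-convexity the quantities $\lambda_{jk}\,d(x_{jk},x_{jk}')^p$ recombine exactly to $s_j^p\approx\n{v_j}_{\fbs}^p$. The duality proof is shorter and more conceptual, but---as the paper itself remarks immediately after it---is tied to the specific $\ell_p/\ell_{p'}$ pairing and would not obviously extend to other homogeneous expressions in the Krivine calculus; your direct argument avoids duality entirely and is in spirit the duality-free alternative the paper later alludes to (via Theorem~\ref{thm-factorization-lipschitz-p-convex} combined with \cite[Thm.~3]{Raynaud-Tradacete}), though considerably more elementary than going through the full factorization machinery.
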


\begin{proof}
The ``if'' part is trivial: $p$-convexity of $\hat{T}$ clearly implies Lipschitz $p$-convexity of $T$ with no increment in the constant, since $\n{m_{xx'}}_{\fbs(X)} = d(x,x')$ and $\hat{T}m_{xx'} = Tx-Tx'$.

Now suppose that $T$ is Lipschitz $p$-convex.
Let $\varphi_j^* \in E^*$ be arbitrary.
For any $x_j,x_j' \in X$ with $x_j \not=x'_j$ we obviously have
$$
\bigg( \sum_j \Big| \frac{\pair{\varphi_j^*}{Tx_j-Tx'_j}}{d(x_j,x'_j)} \Big|^{p'} \bigg)^{1/p'} 
= \sup_{ \sum_j |\alpha_j|^p \le 1} \sum_j \alpha_j \frac{\pair{\varphi_j^*}{Tx_j-Tx'_j}}{d(x_j,x'_j)}.
$$
Using \cite[Prop. 1.d.2.(iii)]{Lindenstrauss-Tzafriri-II}, the latter is bounded by
\begin{multline*}
\sup_{ \sum_j |\alpha_j|^p \le 1} \bigg( \bigg( \sum_j |\varphi_j^*|^{p'} \bigg)^{1/p'} \bigg) \bigg( \bigg( \sum_j |\alpha_j|^p \frac{\big|Tx_j-Tx'_j\big|^p}{d(x_j,x'_j)^p} \bigg)^{1/p} \bigg) \\
\le \n{ \bigg( \sum_j |\varphi_j^*|^{p'} \bigg)^{1/p'} }_{L^*} \sup_{ \sum_j |\alpha_j|^p \le 1} \n{ \bigg( \sum_j |\alpha_j|^p \frac{\big|Tx_j-Tx'_j\big|^p}{d(x_j,x'_j)^p} \bigg)^{1/p} }_E 
\end{multline*}
The Lipschitz $p$-convexity of $T$ allows us to bound this by
\begin{multline*}
\n{ \bigg( \sum_j |\varphi_j^*|^{p'} \bigg)^{1/p'} }_{E^*} M^{(p)}_{\Lip}(T)\sup_{ \sum_j |\alpha_j|^p \le 1} \bigg( \sum_j |\alpha_j|^p \frac{d(x_j,x'_j)^p}{d(x_j,x'_j)^p} \bigg)^{1/p} \\
= M^{(p)}_{\Lip}(T) \n{ \bigg( \sum_j |\varphi_j^*|^{p'} \bigg)^{1/p'} }_{E^*}.
\end{multline*}
Therefore,
$$
\bigg( \sum_j \Big| \frac{(\hat{T}^*\varphi_j^*)(x_j)-(\hat{T}^*\varphi_j^*)(x'_j)}{d(x_j,x'_j)} \Big|^{p'} \bigg)^{1/p'}  \le  M^{(p)}_{\Lip}(T) \n{ \bigg( \sum_j |\varphi_j^*|^{p'} \bigg)^{1/p'} }_{E^*},
$$
so taking the supremum over all pairs $x_j,x'_j \in X$ with $x_j \not=x'_j$ we conclude
$$
\bigg( \sum_j \n{ \hat{T}^*\varphi_j^* }_{X^\#}^{p'} \bigg)^{1/p'}  \le  M^{(p)}_{\Lip}(T) \n{ \bigg( \sum_j |\varphi_j^*|^{p'} \bigg)^{1/p'} }_{E^*}.
$$
Since the $\varphi_j^* \in L^*$ were arbitrary, this means that $\hat{T}^* : L^* \to X^\#$ is $p'$-concave with $M_{(p')}(\hat{T}^*) \le M^{(p)}_{\Lip}(T)$, and by duality \cite[Prop. 1.d.4]{Lindenstrauss-Tzafriri-II}
 $\hat{T} : \fbs(X) \to L$ is $p$-convex with $M^{(p)}(\hat{T}) \le M^{(p)}_{\Lip}(T)$.
\end{proof}

Let us note that the argument in the previous result is based on the duality between $p$-convexity and $p'$-concavity, so it seems unlikely that it could be used to prove a similar result for other classes of maps obtained by replacing the expression $ \Big(\sum_j |x_j|^p\Big)^{1/p}$ by other homogeneous functions given by the Krivine functional calculus for Banach lattices.

\begin{remark}
	It is worth pointing out that Theorem \ref{thm-nonlinear-nikishin} follows immediately from Theorem \ref{theorem-lipschitz-convexity} and the classical Maurey/Nikishin factorization theorem \cite[Thm. 7.1.2]{Albiac-Kalton}.
\end{remark}

\section{Lipschitz $q$-concave maps}\label{sec-lip-q-concave}

Following up on the previous work on $p$-convexity we now point our attention to the natural companion concept, that of Lipschitz $q$-concavity.

\begin{definition}
Let $X$ be a metric space and $E$ a Banach lattice. A map $T : E \to X$ is called \emph{Lipschitz $q$-concave} if there exists a constant $C\ge 0$ such that for any $v_j,v'_j \in E$ and any $\lambda_j \ge 0$,
$$
\bigg(\sum_{j=1}^n  \lambda_j d(Tv_j,Tv'_j)^q\bigg)^{1/q} \le C  \n{ \bigg(\sum_{j=1}^n \lambda_j|v_j-v'_j|^q\bigg)^{1/q} }_E.
$$
The smallest such constant $C$ is the \emph{Lipschitz $p$-concavity constant} of $T$ and is denoted by $M_{(p)}^{\Lip}(T)$.
\end{definition}

This time we can apply the idea from \cite{Farmer-Johnson-09} and note that it suffices to check the condition with all the $\lambda_j$ being equal to one. From that it one can deduce the same inequality for integer values of $\lambda_j$, hence rational values, and for arbitrary values of $\lambda_j$ we approximate with increasing sequences of rationals.
We will primarily be interested in the case when $X$ is a Banach space.
Note that when $X$ is a Banach space and $T : E \to X$ is linear, clearly $T$ is $p$-concave if and only if it is Lipschitz $p$-concave (and with the same constant).

Unlike in the Lipschitz $p$-convexity situation, there is no natural candidate for a linearized map that is $q$-concave if and only if $T$ is Lipschitz $q$-concave, because now the metric space is not the domain and thus there is no canonical linearization.

The following factorization theorem and its proof are inspired by the corresponding result for linear maps proven by Krivine \cite{Krivine-factorisation-reticules}.
The presentation follows that of \cite[Thm. 1.d.11]{Lindenstrauss-Tzafriri-II}.

\begin{theorem}\label{theorem-lipschitz-concavity}
Let $X$, $Y$ be metric spaces with $Y$ complete and $E$ a Banach lattice.
Suppose that $T : X \to E$ is Lipschitz $p$-convex and $S : E \to Y$ is Lipschitz $p$-concave.
Then the map $ST$ can be factored through an $L_p(\mu)$ space.
Moreover, we may arrange to have $ST = S_1T_1$ with $T_1 : X \to L_p(\mu)$, $S_1 : L_p(\mu)\to Y$, $\Lip(T_1) \le M^{(p)}_{\Lip}(T)$ and $\Lip(S_1) \le M^{\Lip}_{(p)}(S)$.
\end{theorem}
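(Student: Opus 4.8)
The plan is to reduce the entire statement to the construction of a single lattice seminorm on $E$. Write $M_T = M^{(p)}_{\Lip}(T)$ and $M_S = M^{\Lip}_{(p)}(S)$, and assume $1 \le p < \infty$. Suppose we can produce a seminorm $\rho$ on $E$ whose associated completion $\widehat{E} := \overline{E/\ker\rho}$ is lattice-isometric to a concrete $L_p(\mu)$ space and which satisfies
\[
\rho(Tx - Tx') \le M_T\, d(x,x'), \qquad d(Sv, Sv') \le M_S\, \rho(v-v')
\]
for all $x,x'\in X$ and $v,v'\in E$. Then the factorization is essentially formal. Let $\theta : E \to L_p(\mu)$ be the canonical lattice map and set $T_1 = \theta\circ T$; the first inequality says exactly that $\Lip(T_1)\le M_T$. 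The second inequality says that $\theta v \mapsto Sv$ is a well-defined Lipschitz map on the dense image $\theta(E)\subseteq L_p(\mu)$ with constant at most $M_S$; since $Y$ is complete it extends to a Lipschitz map $S_1 : L_p(\mu)\to Y$ with $\Lip(S_1)\le M_S$, and by construction $S_1T_1 = ST$. Thus $Y$ complete is used precisely for this extension.

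The first real step is to record the inequality that marries the two hypotheses. Fix $x_j,x_j'\in X$, $v_k,v_k'\in E$ and weights $\lambda_k,\mu_j\ge 0$, and suppose that $\sum_k \lambda_k |v_k-v_k'|^p \le \sum_j \mu_j |Tx_j-Tx_j'|^p$ in $E$, using the Krivine functional calculus. Taking $p$-th roots preserves this order, so Lipschitz $p$-concavity of $S$, followed by monotonicity of the lattice norm and Lipschitz $p$-convexity of $T$, yields
\[
\sum_k \lambda_k d(Sv_k, Sv_k')^p \le M_S^p\, \Bn{ \Big(\sum_k \lambda_k |v_k-v_k'|^p\Big)^{1/p}}_E^p \le M_S^p M_T^p \sum_j \mu_j d(x_j,x_j')^p .
\]
This is the nonlinear analogue of the basic estimate behind Krivine's theorem, and it is the only place where the two Lipschitz hypotheses enter.

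The heart of the matter, and the step where I expect all the difficulty to concentrate, is the construction of $\rho$. Following the linear argument of \cite[Thm. 1.d.11]{Lindenstrauss-Tzafriri-II}, I would pass to the $p$-concavification of $E$, in which the elements $|w|^p$ ($w\in E$) live and on which the Krivine calculus is linearized. On the positive cone I would introduce the sublinear gauge coming from $T$,
\[
P(u) = \inf\Big\{ M_T^p \textstyle\sum_j \mu_j d(x_j,x_j')^p \; : \; u \le \textstyle\sum_j \mu_j |Tx_j-Tx_j'|^p \Big\},
\]
and the superlinear gauge coming from $S$,
\[
Q(u) = \sup\Big\{ M_S^{-p} \textstyle\sum_k \lambda_k d(Sv_k,Sv_k')^p \; : \; \textstyle\sum_k \lambda_k |v_k-v_k'|^p \le u \Big\}.
\]
The combined inequality above says exactly that $Q\le P$. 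A Hahn--Banach sandwich argument then produces a linear functional $\phi$ with $Q\le\phi\le P$; since $P$ is monotone and $Q\ge 0$ on the cone, $\phi$ is positive, and the single-term bounds $P(|Tx-Tx'|^p)\le M_T^p d(x,x')^p$ and $Q(|v-v'|^p)\ge M_S^{-p} d(Sv,Sv')^p$ give $\phi(|Tx-Tx'|^p)\le M_T^p d(x,x')^p$ and $\phi(|v-v'|^p)\ge M_S^{-p} d(Sv,Sv')^p$. Setting $\rho(w) = \phi(|w|^p)^{1/p}$ produces an abstract $L_p$-seminorm with exactly the two bounds required in the first paragraph, and the Bohnenblust--Nakano/Kakutani representation identifies its completion with a concrete $L_p(\mu)$.

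The hard part will be making this construction rigorous rather than the final assembly: one must check that $P$ and $Q$ are genuinely sub- and superlinear on the $p$-concavification, that the sandwich functional can be chosen positive and finite on the order ideal generated by the range of $T$ (outside of which $P=+\infty$), and that $\rho$ really is an $L_p$-seminorm and not merely a lattice seminorm. These are precisely the points treated in the linear theorem; the novelty is only that every norm on the domain and codomain is replaced by a distance and every vector by a difference, a substitution that is legitimate because the combined inequality of the second paragraph survives it. Notably, once $\rho$ is in hand the linearization of Theorem \ref{theorem-lipschitz-convexity} is not needed, since the factorization is read off directly as in the first paragraph.
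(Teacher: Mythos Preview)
Your proposal is correct and follows essentially the same route as the paper's proof: both pass to the $p$-concavification of the ideal generated by the range of $T$, use the combined inequality to set up a separation/sandwich argument yielding a positive linear functional $\phi$, define $\rho(w)=\phi(|w|^p)^{1/p}$, invoke Kakutani to identify the completion with an $L_p(\mu)$, and read off $T_1,S_1$ exactly as you describe. The only cosmetic difference is that the paper phrases the Hahn--Banach step as separation of two disjoint convex sets $F_1,F_2$ in the concavification rather than as a sandwich between your gauges $P$ and $Q$, but these are equivalent formulations of the same argument.
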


\begin{proof}
Let $I_T$ be the (in general non-closed) ideal of $E$ generated by the range of $T$.
We define new operations on $I_T$ as in the usual $p$-concavification procedure, that is, for $x, y \in I_T$ and real $\alpha$ put
$$
x \oplus y := (x^p+y^p)^{1/p}, \qquad \qquad \alpha \odot x := \alpha^{1/p} x,
$$
and let $\check{I}_T$ denote the vector lattice obtained when $I_T$ is endowed with the original order and the operations $\oplus, \odot$. Set
\begin{multline*}
 F_1 := \conv \Big\{ x \in \check{I}_T : |x| \le \lambda|Tv-Tv'| \\ \text{ for some } v,v' \in X, \lambda>0  \text{ with } \lambda d(v,v') < 1/M^{(p)}_{\Lip}(T) \Big\} 
\end{multline*}
and
\begin{multline*}
 F_2 := \conv \Big\{ x \in \check{I}_T : x>0 \text{ and } \eta d(Sy,Sy') \ge M^{\Lip}_{(p)}(S) \\ \text{ for some } y,y' \in L, \eta>0 \text{ with } \eta|y-y'| \le x \Big\}.
\end{multline*}
where both convex hulls are taken in the sense of $\check{I}_T$, i.e. using the operations $\oplus, \odot$.

If $x$ belongs to $F_1$, then it can be written the form $\bigoplus_j \alpha_j \odot x_j$ where $\alpha_j \ge 0$, $\sum_j \alpha_j =1$ and $|x_j| \le \lambda_j |Tv_j -Tv'_j|$ with $\lambda_j d(v_j,v'_j) < 1/ M^{(p)}_{\Lip}(T)$. Therefore,
\begin{align*}
\n{x} &= \bigg\| \Big( \sum_j |\alpha_j^{1/p} x_j|^p  \Big)^{1/p} \bigg\|  \le \bigg\| \Big( \sum_j \alpha_j \lambda_j^p |Tv_j-Tv'_j|^p  \Big)^{1/p} \bigg\| \\
&\le M^{(p)}_{\Lip}(T) \Big( \sum_j \alpha_j \lambda_j^p d(v_j,v'_j)^p \Big)^{1/p} < 1.
\end{align*}

On the other hand, if $x$ belongs to $F_2$ then it can be written as $\bigoplus_j \beta_j \odot x_j$ where $\beta_j \ge 0$, $\sum_j \beta_j =1$ and $x_j \ge \eta_j |y_j-y'_j|$ with $\eta_j \ge 0$ and $\eta_j d(Sy_j,Sy'_j) \ge M^{\Lip}_{(p)}(S)$.
Therefore
\begin{align*}
\n{x} &= \bigg\| \Big( \sum_j |\beta_j^{1/p} x_j|^p  \Big)^{1/p} \bigg\| \ge \bigg\| \Big( \sum_j \beta_j \eta_j^p|y_j-y'_j|^p  \Big)^{1/p} \bigg\| \\
&\ge \frac{1}{M^{\Lip}_{(p)}(S)}  \Big( \sum_j \beta_j \eta_j^p d(Sy_j,Sy'_j)^p \Big) \ge 1.
\end{align*}

Hence, $F_1 \cap F_2 = \emptyset$ and since $0$ is an internal point of $F_1$
it follows from the separation theorem that there exists a linear functional $\varphi$ on $\check{I}_T$ such that $\varphi(x) \le 1$ for all $x \in F_1$ and $\varphi(x) \ge 1$ for all $x \in F_2$.
Note that from the definition of $F_2$, for any positive real $\alpha$, any positive $x$ in $\check{I}_T$ and any $x_0 \in F_2$ we have that $\alpha \odot x \oplus x_0$ belongs to $F_2$.
It follows that $\varphi(x) \ge 0$ whenever $0 < x \in \check{I}_T$ and, thus, we can define a seminorm on $I_T$ by putting
$$
\n{x}_0 := \varphi(|x|)^{1/p}, \qquad x \in I_T.
$$
Let $\alpha$ be a real number and $x \in I_T$. Then
$$
\n{\alpha x}_0 = \varphi(|\alpha||x|)^{1/p} = \varphi(|\alpha|^p \odot |x|)^{1/p} = \big[|\alpha|^p \varphi(|x|)  \big]^{1/p} = |\alpha| \n{x}_0.
$$
Let $x,y \in I_T$. Note that $|x|+|y| = \big(|x|^{1/p}\oplus|y|^{1/p}\big)^{p}$.
On the other hand, from the lattice functional calculus and H\"older's inequality, whenever $\alpha$ and $\beta$ are positive reals with $\alpha^{p'}+\beta^{p'}=1$ we have
$$
\big(|x|^{1/p}\oplus|y|^{1/p}\big)^{p} \le \alpha^{-p} \odot |x| + \beta^{-p} \odot |y|.
$$
Hence
\begin{align*}
\n{x+y}_0^p &= \varphi\big(|x+y|\big) \le \varphi\big(|x|+|y|\big) = \varphi\big( \big(|x|^{1/p}\oplus|y|^{1/p}\big)^{p} \big) \\
&\le  \varphi\big( \alpha^{-p} \odot |x| + \beta^{-p} \odot |y| \big) = \alpha^{-p}\varphi(|x|) + \beta^{-p}\varphi(|y|) \\
&= \alpha^{-p} \n{x}_0^p + \beta^{-p}\n{y}_0^p.
\end{align*}
Therefore, setting
$$
\alpha := \frac{\n{x}_0^{1/p'}}{(\n{x}_0+\n{y}_0)^{1/p'}} \qquad\text{and}\qquad \beta := \frac{\n{y}_0^{1/p'}}{(\n{x}_0+\n{y}_0)^{1/p'}}
$$
we satisfy the condition $\alpha^{p'}+\beta^{p'}=1$, while
$$
\alpha^{-p} = \frac{(\n{x}_0+\n{y}_0)^{p-1}}{\n{x}_0^{p-1}} \qquad\text{and}\qquad  \beta^{-p} = \frac{(\n{x}_0+\n{y}_0)^{p-1}}{\n{y}_0^{p-1}}
$$
so we conclude
$$
\n{x+y}_0^p \le \frac{(\n{x}_0+\n{y}_0)^{p-1}}{\n{x}_0^{p-1}}\n{x}_0^p + \frac{(\n{x}_0+\n{y}_0)^{p-1}}{\n{y}_0^{p-1}}\n{y}_0^p  = \big( \n{x}_0 + \n{y}_0 \big)^p,
$$
and thus $\n{x+y}_0 \le \n{x}_0 + \n{y}_0$.

Observe now that for any $x,y \in I_T$ we have
$$
|x|+|y| \ge \big( |x|^p + |y|^p \big)^{1/p} \ge |x| \vee |y|
$$
since these inequalities are valid for reals. By the fact that $\varphi$ is non-negative, we get that
\begin{align*}
\big\| |x|+|y| \big\|_0^p &= \varphi\big(|x|+|y|\big) \ge \varphi\big((|x|^p+|y|^p)^{1/p}\big) = \varphi(|x|\oplus|y|) = \varphi(|x|) + \varphi(|y|) \\
&= \n{x}_0^p + \n{y}_0^p \ge \varphi\big( |x|\vee|y| \big) = \big\| |x|\vee|y| \big\|_0^p. 
\end{align*}
This inequality concerning $\n{\cdot}_0$ clearly remains valid in the completion $Z$ of $I_T$ modulo the ideal of all $x \in I_T$ for which $\n{x}_0 = 0$.
Therefore, if $|x| \wedge |y| = 0$ for some $x$ and $y$ in the lattice $Z$ then (recalling that $|x|\wedge|y| = |x|+|y|-|x|\vee|y|$) we obtain
$$
\big\| |x|+|y| \big\|_0^p = \n{x}_0^p + \n{y}_0^p,
$$
i.e. $Z$ is an abstract $L_p$ space. It follows from the $L_p$ version of Kakutani's representation theorem that $Z$ is order isometric to an $L_p(\mu)$ space for a suitable measure $\mu$.

Let $T_1 : X \to Z$ be defined by $T_1v = Tv$, $v \in X$, i.e. the same as $T$ but considered as an map into $Z$.
For $v,v' \in X$ and $\lambda>0$, if $\lambda d(v,v') < 1/M^{(p)}_{\Lip}(T)$ then $\lambda(T_1v - T_1v') \in F_1$, which implies that $\varphi(\lambda(T_1v-T_1v')) \le 1$ and thus $\n{\lambda(T_1v-T_1v')}_0 \le 1$, from where it follows that
$\n{T_1v-T_1v'}_0 \le M^{(p)}_{\Lip}(T) d(v,v')$, i.e. $\Lip(T_1) \le M^{(p)}_{\Lip}(T)$.

Let $S_1 : I_T/\ker(\n{\cdot}_0) \to Y$ be defined by $S_1x = Sx$, $x \in I_T$.
Note that this is well defined: if $Sx\not= Sx'$, then $\tfrac{M^{\Lip}_{(p)}(S)}{d(Sx,Sx')}|x-x'|$ belongs to $F_2$, so $\varphi\Big(\tfrac{M^{\Lip}_{(p)}(S)}{d(Sx,Sx')}|x-x'|\Big) \ge 1$ and in particular $\n{x-x'}_0\not= 0$.
By an argument similar to the one for $T_1$, this defines a Lipschitz map from $I_T/\ker(\n{\cdot}_0)$ to $Y$ with Lipschitz constant at most $M^{\Lip}_{(p)}(S)$.
Since $I_T/\ker(\n{\cdot}_0)$ is dense in $Z$ and $Y$ is complete, this can be extended to a Lipschitz map $S_1 : Z \to Y$ with the same Lipschitz constant, giving the desired factorization.
\end{proof}

\section{Factorization theorems}\label{sec-factorization-theorems}

In this section we prove some results that make very clear why our Lipschitz versions of the  Maurey/Nikishin and Krivine  factorization Theorems hold.
They are nonlinear generalizations of the beautiful factorization theorems of Raynaud and Tradacete \cite{Raynaud-Tradacete}, that characterize $p$-convex and $q$-concave linear maps in terms of factorizations through $p$-convex and $q$-concave Banach lattices.

\subsection{Factorization of Lipschitz $p$-convex maps}

We start by proving the following characterization of Lipschitz $p$-convex maps, compare to \cite[Thm. 3]{Raynaud-Tradacete}.

	\begin{theorem}\label{thm-factorization-lipschitz-p-convex}
		Let $E$ be a Banach lattice, $X$ a metric space and $1 \le p \le \infty$. A
		Lipschitz map $T : X \to E$ is Lipschitz $p$-convex if and only if there exist a $p$-convex Banach
		lattice $W$, a positive operator (in fact, an injective interval preserving lattice
		homomorphism) $\psi : W \to E$ and another Lipschitz map $R : E \to W$
such that $T = \psi \circ R$.
	$$
	\xymatrix{
	X \ar[rr]^{T}\ar[dr]_{R} & &E  \\
	 &W \ar[ru]_{\psi} & 
	}
	$$
	Moreover, $M^{(p)}_{\Lip}(T) = \inf \Lip(R) \cdot M^{(p)}(I_W) \cdot \n{\psi}$ where the infimum is taken over all such factorizations.
	\end{theorem}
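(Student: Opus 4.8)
The plan is to establish the two implications separately, reading off the constant identity from the estimates produced along the way. The implication ``existence of the factorization $\Rightarrow$ Lipschitz $p$-convexity'' is a short functional-calculus computation; for the converse I would linearize via Theorem~\ref{theorem-lipschitz-convexity} and then invoke the linear factorization theorem of Raynaud and Tradacete \cite[Thm.~3]{Raynaud-Tradacete} as a black box. Throughout I treat $1\le p<\infty$, the case $p=\infty$ requiring only the obvious replacement of $\ell_p$-sums by suprema.

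For the easy direction, suppose $T=\psi\circ R$ with $R:X\to W$ Lipschitz, $W$ a $p$-convex Banach lattice, and $\psi:W\to E$ a positive operator. Fix $x_j,x_j'\in X$ and $\lambda_j\ge 0$, and set $w_j:=Rx_j-Rx_j'$, so that $|Tx_j-Tx_j'|=|\psi(w_j)|$. A positive operator satisfies the Krivine-calculus inequality $(\sum_j\lambda_j|\psi w_j|^p)^{1/p}\le\psi\big((\sum_j\lambda_j|w_j|^p)^{1/p}\big)$, a consequence of writing the convex positively homogeneous function $(t_j)\mapsto(\sum_j\lambda_j|t_j|^p)^{1/p}$ as a supremum of linear ones and of $\psi(\bigvee_\alpha z_\alpha)\ge\bigvee_\alpha\psi(z_\alpha)$. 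Writing $g:=(\sum_j\lambda_j|w_j|^p)^{1/p}\ge 0$ and combining, in turn, $\n{\psi g}_E\le\n{\psi}\,\n{g}_W$, the $p$-convexity of $W$, and $\n{w_j}_W\le\Lip(R)\,d(x_j,x_j')$, I obtain
\[
\n{\big(\sum_j\lambda_j|Tx_j-Tx_j'|^p\big)^{1/p}}_E\le\Lip(R)\,M^{(p)}(I_W)\,\n{\psi}\,\big(\sum_j\lambda_j\,d(x_j,x_j')^p\big)^{1/p}.
\]
Hence $T$ is Lipschitz $p$-convex and $M^{(p)}_{\Lip}(T)\le\Lip(R)\,M^{(p)}(I_W)\,\n{\psi}$ for every such factorization, so $M^{(p)}_{\Lip}(T)\le\inf\Lip(R)\,M^{(p)}(I_W)\,\n{\psi}$.

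For the converse, suppose $T$ is Lipschitz $p$-convex. By Theorem~\ref{theorem-lipschitz-convexity} its linearization $\hat T:\fbs(X)\to E$ is $p$-convex with $M^{(p)}(\hat T)=M^{(p)}_{\Lip}(T)$. Given $\eps>0$, the linear theorem of Raynaud and Tradacete provides a $p$-convex Banach lattice $W$, a bounded linear operator $\tilde R:\fbs(X)\to W$, and an injective interval-preserving lattice homomorphism $\psi:W\to E$ with $\hat T=\psi\circ\tilde R$ and $\n{\tilde R}\,M^{(p)}(I_W)\,\n{\psi}\le M^{(p)}(\hat T)+\eps$. Let $\iota:X\to\fbs(X)$ be the isometric embedding of Theorem~\ref{thm-Arens-Eells} and set $R:=\tilde R\circ\iota:X\to W$. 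Then $R$ is Lipschitz with $\Lip(R)\le\n{\tilde R}$ (as $\iota$ is an isometry), and $T=\hat T\circ\iota=\psi\circ\tilde R\circ\iota=\psi\circ R$ is a factorization of the required form. Consequently $\Lip(R)\,M^{(p)}(I_W)\,\n{\psi}\le M^{(p)}_{\Lip}(T)+\eps$; letting $\eps\to 0$ gives $\inf\Lip(R)\,M^{(p)}(I_W)\,\n{\psi}\le M^{(p)}_{\Lip}(T)$, which combined with the previous paragraph yields the claimed equality.

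The only nontrivial ingredient is the linear Raynaud--Tradacete factorization, used as a black box; the point requiring care is that passing to the free space costs nothing in the infimum. This is guaranteed by the two one-sided bounds pinching: the easy direction shows no Lipschitz factorization can beat $M^{(p)}_{\Lip}(T)$, while the linearized construction realizes that value in the limit. I expect this infimum-matching, rather than either construction, to be the subtle part of the write-up. As a self-contained alternative one could instead mimic the $p$-convexification argument of Theorem~\ref{theorem-lipschitz-concavity}: build $W$ from the ideal $I_T$ generated by the range of $T$, equip it with a suitable $p$-convex lattice renorming dictated by the Lipschitz $p$-convexity data, and take $\psi$ to be the canonical inclusion; there the main obstacle would be checking that this renorming is a complete $p$-convex lattice norm and that the inclusion is interval preserving, which is exactly the technical core of the Raynaud--Tradacete theorem.
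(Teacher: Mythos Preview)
Your proof is correct, but it takes a different route from the paper's. The paper explicitly acknowledges your approach in the Remark immediately following the theorem statement: Theorem~\ref{thm-factorization-lipschitz-p-convex} does follow at once from Theorem~\ref{theorem-lipschitz-convexity} combined with \cite[Thm.~3]{Raynaud-Tradacete}, exactly as you do. However, the paper deliberately avoids this and instead gives a direct construction: it defines the set $A$ of elements of $E$ dominated by expressions $(\sum_i\lambda_i|Tx_i-Tx_i'|^p)^{1/p}$ with $\sum_i\lambda_i d(x_i,x_i')^p\le 1$, takes $W$ to be the space where the Minkowski functional of $\bar A$ is finite, checks by hand that this yields a $p$-convex Banach lattice with constant~$1$, and lets $\psi$ be the formal inclusion and $R$ the map $T$ viewed as landing in $W$.

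The trade-off is this. Your argument is short and clean but imports two nontrivial ingredients: Theorem~\ref{theorem-lipschitz-convexity} (whose proof uses the $p$-convexity/$p'$-concavity duality) and the linear Raynaud--Tradacete theorem. The paper's direct proof is longer but self-contained, and crucially it does not rely on Theorem~\ref{theorem-lipschitz-convexity}; this allows the author to turn the logic around and use Theorem~\ref{thm-factorization-lipschitz-p-convex} together with \cite[Thm.~3]{Raynaud-Tradacete} to give a duality-free proof of Theorem~\ref{theorem-lipschitz-convexity}. The direct construction also produces an explicit lattice $W_{T,p}$ that the paper refers to later when discussing minimality properties and renormings. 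Your final paragraph essentially sketches this direct alternative, so you already see both routes; just be aware that the paper chose the latter for the reasons above.
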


\begin{remark}
	Theorem \ref{thm-factorization-lipschitz-p-convex} in fact follows as an immediate corollary from Theorem \ref{theorem-lipschitz-convexity} and \cite[Thm. 3]{Raynaud-Tradacete}, but we have chosen to show a direct proof.
In this way, Theorem \ref{thm-factorization-lipschitz-p-convex} (together with \cite[Thm. 3]{Raynaud-Tradacete}) can be used to provide an alternative proof of Theorem \ref{theorem-lipschitz-convexity} that does not rely on duality arguments.
\end{remark}

\begin{proof}[Proof of Theorem \ref{thm-factorization-lipschitz-p-convex}]
We will assume that $1\le p<\infty$, the proof of the case $p=\infty$ can be obtained via the usual changes.

If we do have such a factorization,
consider $x_j,x'_j \in X$. By the positivity of $\psi$, using \cite[Prop. 1.d.9]{Lindenstrauss-Tzafriri-II} and the $q$-convexity of $W$,
\begin{multline*}
	\n{ \bigg(\sum_{j=1}^n |Tx_j-Tx'_j|^p\bigg)^{1/p} }_E
	= \n{ \bigg(\sum_{j=1}^n |\psi (Rx_j-Rx'_j)|^p\bigg)^{1/p} }_E \\
\le \n{\psi} \n{ \bigg(\sum_{j=1}^n |\psi (Rx_j-Rx'_j)|^p\bigg)^{1/p} }_W \\
\le \n{\psi} M^{(p)}(I_W) \bigg(\sum_{j=1}^n  \n{Rx_j - Rx'_j}_W^p\bigg)^{1/p} \\
\le \n{\psi} M^{(p)}(I_W) \Lip(R) \bigg(\sum_{j=1}^n  d(x_j,x'_j)^p\bigg)^{1/p},
\end{multline*}
showing that $T$ is Lipschitz $p$-convex with
$M^{(p)}_{\Lip}(T) \le \n{\psi} M^{(p)}(I_W) \Lip(R)$.
\\
\\
Now let $T : X \to E$ be Lipschitz $p$-convex.
Consider the set
\begin{multline*}
A := \bigg\{ u \in E \; : \; |u| \le \bigg( \sum_{i=1}^k \lambda_i|Tx_i-Tx_i'|^p \bigg)^{1/p}, \\
\text{ where } \lambda_i\ge 0 \text{ and } \sum_{i=1}^k\lambda_id(x_i,x_i')^p \le 1 \bigg\}.	
\end{multline*}
We can consider the Minkowski functional defined by its closure $\bar{A}$ in $E$,
$$
\n{z}_W = \inf\{ \mu>0 \; : \; z \in \mu \bar{A} \}.
$$
Clearly $\bar{A}$ is solid, and since $T$ is Lipschitz $p$-convex, it is also a bounded subset of $E$.
Let us consider the space $W = \{ z \in L \; : \; \n{z}_W < \infty \}$.
We claim that for any $z_1 , \dotsc, z_n$ in $W$ it follows that $\Big(\sum_{i=1}^n|z_i|^p\Big)^{1/p}$ is in $W$ and moreover
$$
\n{\bigg(\sum_{i=1}^n|z_i|^p\bigg)^{1/p}}_W \le \bigg(\sum_{i=1}^n\n{z_i}_W^p\bigg)^{1/p}.
$$
Given $\varepsilon>0$, for each $i = 1, \dotsc, n$ there exist $\mu_i$ with $z_i \in \mu_i\bar{A}$ such that $\mu_i^p \le \n{z_i}_W + \frac{\varepsilon^p}{n}$.
Thus, for every $\delta>0$ there exist $\{z_i^\delta\}_{i=1}^n$ in $E$ such that $\n{z_i-z_i^\delta}_E \le \delta$ and 
$$
|z_i^\delta| \le \bigg( \sum_{j=1}^{m_{i,\delta}} \lambda^{i,\delta}_j|Tx^{i,\delta}_j-Ty^{i,\delta}_j|^p \bigg)^{1/p}
$$
where the nonnegative numbers $\{ \lambda^{i,\delta}_j \}_{j=1}^{m_{i,\delta}}$    and the points $\{x^{i,\delta}_j, y^{i,\delta}_j\}_{j=1}^{m_{i,\delta}}$ in $X$ satisfy
$$
\bigg( \sum_{j=1}^{m_{i,\delta}} \lambda^{i,\delta}_j d(x^{i,\delta}_j,y^{i,\delta}_j)^p \bigg)^{1/p} \le \mu_i
$$
for each $i = 1, \dotsc, n$ and each $\delta>0$.
For each $\delta>0$, define
$$
w_\delta = \bigg(\sum_{i=1}^n|z^\delta_i|^p\bigg)^{1/p}.
$$
Then
\begin{multline*}
	\n{ \bigg(\sum_{i=1}^n|z_i|^p\bigg)^{1/p} - w_\delta }_E \le 
	\n{ \bigg(\sum_{i=1}^n|z_i - z_i^\delta|^p\bigg)^{1/p} }_E  \\
	\le \bigg(\sum_{i=1}^n \n{z_i - z_i^\delta}_E^p\bigg)^{1/p} \le n\delta.	
\end{multline*}
Moreover, we will show that for every $\delta>0$, $w_\delta$ belongs to $\big(\sum_{i=1}^n \mu_i^p\big)^{1/p} A$.
Indeed,
$$
|w_\delta| = \bigg(\sum_{i=1}^n|z^\delta_i|^p\bigg)^{1/p} \le
\bigg(\sum_{i=1}^n \sum_{j=1}^{m_{i,\delta}} \lambda^{i,\delta}_j|Tx^{i,\delta}_j-Ty^{i,\delta}_j|^p \bigg)^{1/p}
$$
and
$$
\bigg(\sum_{i=1}^n \sum_{j=1}^{m_{i,\delta}} \lambda^{i,\delta}_j d(x^{i,\delta}_j,y^{i,\delta}_j)^p \bigg)^{1/p} \le \bigg(\sum_{i=1}^n\mu_i^p\bigg)^{1/p}. 
$$
Hence, $\big(\sum_{i=1}^n|z_i|^p\big)^{1/p}$ belongs to $\big(\sum_{i=1}^n\mu_i^p\big)^{1/p}\bar{A}$. Therefore, from the definition of $\n{\cdot}_W$ and the choice of $\mu_i$,
\begin{multline*}
	\n{\bigg(\sum_{i=1}^n|z_i|^p\bigg)^{1/p}}_W \le \bigg(\sum_{i=1}^n\mu_i^p\bigg)^{1/p} \\ \le \bigg(\sum_{i=1}^n (\n{z_i}_W^p +\varepsilon^p/n ) \bigg)^{1/p}
	\le  \bigg(\sum_{i=1}^n \n{z_i}_W^p \bigg)^{1/p} + \varepsilon.
\end{multline*}
Letting $\varepsilon$ go to 0, we conclude
$$
\n{\bigg(\sum_{i=1}^n|z_i|^p\bigg)^{1/p}}_W \le \bigg(\sum_{i=1}^n \n{z_i}_W^p \bigg)^{1/p}.
$$
It follows that the Minkowski functional $\n{\cdot}_W$ is in fact a norm on $W$.
Since $\bar{A}$ is bounded, $\n{z}_W = 0$ implies that $z = 0$.
Moreover if $u,v \in W$ are not zero, set
$$
\tilde{u} = \frac{|u|}{\n{u}_W}, \quad, \tilde{v} = \frac{|v|}{\n{v}_W},
\alpha = \frac{\n{u}_W}{\n{u}_W + \n{v}_W}, \quad \beta = \frac{\n{v}_W}{\n{u}_W + \n{v}_W}.
$$
Since $\n{\tilde{u}}_W = \n{\tilde{v}}_W = 1$, $\alpha,\beta \ge 0$ and $\alpha + \beta = 1$ we have
\begin{align*}
	\n{u+v}_W &\le \n{ |u|+|v| }_W 
	= \big( \n{u}_W + \n{v}_W \big) \n{\alpha \tilde{u} + \beta \tilde{v}}_W \\
	&\le \big( \n{u}_W + \n{v}_W \big) \n{(\alpha \tilde{u}^p + \beta \tilde{v}^p)^{1/p}}_W \\
	&\le \big( \n{u}_W + \n{v}_W \big) \big( \alpha \n{\tilde{u}}_W^p + \beta \n{\tilde{v}}_W^p \big) \\
	&= \n{u}_W + \n{v}_W.	
\end{align*}
Thus, $(W, \n{\cdot}_W)$ is a $p$-convex normed lattice with constant 1.
Let $(w_i)_{i=1}^\infty$ be a Cauchy sequence in $W$.
Since for every $z \in E$ it holds that $\n{z}_E \le M_{\Lip}^{(p)}(T) \n{z}_W$,
it follows that  $(w_i)_{i=1}^\infty$ is also a Cauchy sequence in $E$ and this has a limit $w$ in $E$.
Notice that since the $w_i$ are bounded in $W$, there exists some finite $\mu$ such that $w_i \in \mu \bar{S}$ for every $i \in \N$ and since $\bar{S}$ is closed in $E$, we must have $w \in \mu \bar{S}$.
Thus, $w$ belongs to $W$, and we will show that $(w_i)_{i=1}^\infty$ converges to $w$ also in $W$.
To this end, let $\varepsilon>0$. Since $(w_i)_{i=1}^\infty$ is a Cauchy sequence, there exists $N\in\N$ such that $w_i-w_j \in \varepsilon \bar{S}$ whenever $i, j \ge N$.
Thus, if $i \ge N$ we can write
$$
w - w_i = (w - w_j) + (w_j - w_i) 
$$
for every $j \ge N$, and letting $j \to \infty$ we obtain that $w - w_i \in \varepsilon\bar{S}$.
This shows that $(w_i)_{i=1}^\infty$ converges to $w$ in $W$, hence $W$ is complete and therefore a Banach lattice.

Let us observe that from the definition of $W$ we clearly have $\n{Tx-Tx'}_W \le d(x,x')$ for every $x,x' \in X$, so the map $R : X \to W$ given by $Rx = Tx$ is Lipschitz with $\Lip(R) \le 1$. 
Moreover, as noticed above it also holds that $\n{z}_E \le M_{\Lip}^{(p)}(T) \n{z}_W$ for each $z \in E$, hence the formal inclusion $\psi : W \to E$ is clearly and injective interval preserving lattice homomorphism with norm at most $M_{\Lip}^{(p)}(T)$, and $T = \psi \circ R$.
\end{proof}

Similarly to the notation in \cite{Raynaud-Tradacete}, since the lattice $W$ constructed in the proof of Theorem \ref{thm-factorization-lipschitz-p-convex} depends on $T$ and $p$ we will denote it by $W_{T,p}$.
As in \cite[Remark 4]{Raynaud-Tradacete}, note that whenever $1 \le p' \le p$ and $T : X \to E$ is Lipschitz $p$-convex, it always holds that the inclusion $W_{T,p'} \to W_{T,p}$ has norm at most one. 
Moreover, as in \cite[Remark 5]{Raynaud-Tradacete}, note that when $E$ is a $p$-convex Banach lattice and $T$ is the identity map on $E$, then $W_{T,p}$ is a renorming of $E$ with $p$-convexity constant equal to one.
The Banach lattice $W_{T,p}$ also satisfies a certain minimality property that follows from \cite[Prop. 4]{Raynaud-Tradacete}, see that paper for the details.

\subsection{Factorization of Lipschitz $q$-concave maps}

The following characterization of Lipschitz $q$-concave maps is a nonlinear generalization of \cite[Thm. 1]{Raynaud-Tradacete}.

\begin{theorem}\label{thm-factorization-lipschitz-q-concave}
	Let $E$ be a Banach lattice, $X$ a complete metric space and $1 \le q \le \infty$. A
	Lipschitz map $T : E \to X$ is Lipschitz $q$-concave if and only if there exist a $q$-concave Banach
	lattice $V$, a positive operator $\phi : E \to V$ (in fact, a lattice homomorphism with dense image), and another Lipschitz map $S : V \to X$ such that $T = S \circ \phi$.
	$$
	\xymatrix{
	E \ar[rr]^{T}\ar[dr]_{\phi} & &X  \\
	 &V \ar[ru]_{S} & 
	}
	$$
	Moreover, $M_{(q)}^{\Lip}(T) = \inf \n{\phi} \cdot M_{(q)}(I_V) \cdot \Lip(S)$.
\end{theorem}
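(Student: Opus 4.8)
The plan is to mirror the direct construction used for Theorem \ref{thm-factorization-lipschitz-p-convex}, but ``dually'': instead of shrinking the norm of $E$ on a sublattice, I will enlarge the scale on a quotient of $E$ by a supremum-type gauge, so that the quotient becomes $q$-concave. Throughout I assume $1\le q<\infty$ (the case $q=\infty$ follows by the usual modifications) and write $M=M^{\Lip}_{(q)}(T)$; the case $M=0$ forces $T$ constant and is trivial, so I take $M>0$. The ``if'' direction is the routine computation: given $T=S\circ\phi$ with $V$ $q$-concave, $\phi$ a lattice homomorphism and $S$ Lipschitz, for $v_j,v_j'\in E$ and $\lambda_j\ge 0$ one has
$$
\Big(\sum_j \lambda_j d(Tv_j,Tv_j')^q\Big)^{1/q} \le \Lip(S)\Big(\sum_j \lambda_j \n{\phi v_j-\phi v_j'}_V^q\Big)^{1/q},
$$
and then applies $q$-concavity of $V$ to the vectors $\lambda_j^{1/q}(\phi v_j-\phi v_j')$, uses that $\phi$ commutes with the lattice functional calculus (so $|\phi v_j-\phi v_j'|=\phi|v_j-v_j'|$), and finishes with $\n{\phi(\cdot)}_V\le\n{\phi}\n{\cdot}_E$. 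This gives $M\le\n{\phi}\,M_{(q)}(I_V)\,\Lip(S)$ for every factorization, hence $M\le\inf(\cdots)$.

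For the ``only if'' direction I define, for $v\in E$,
$$
\rho(v)^q := \sup\Big\{ \textstyle\sum_{i} \lambda_i\, d(Ty_i,Ty_i')^q \;:\; \lambda_i\ge 0,\ y_i,y_i'\in E,\ \sum_i \lambda_i|y_i-y_i'|^q \le |v|^q \Big\},
$$
the weights being included precisely to force positive homogeneity. Lipschitz $q$-concavity gives $\rho(v)\le M\n{v}_E<\infty$ (testing against $(\sum_i\lambda_i|y_i-y_i'|^q)^{1/q}\le|v|$), and solidity and homogeneity are immediate by inspection and by rescaling the $\lambda_i$. The lattice $V$ will be the completion of $E/\{\rho=0\}$ under $\rho$, with $\phi$ the quotient map (a lattice homomorphism with dense image) and $S$ defined on $\phi(E)$ by $S(\phi v)=Tv$. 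That $S$ is well defined and $1$-Lipschitz, and that $\n{\phi}\le M$, both follow from testing $\rho$ on the one-term family $\{(v,v',1)\}$, which yields $d(Tv,Tv')\le\rho(v-v')\le M\n{v-v'}_E$; completeness of $X$ then lets me extend $S$ to all of $V$. The $q$-concavity of $V$ with constant $1$ is the \emph{easy}, superadditive direction: concatenating families feasible for $w_1,\dots,w_n$ produces a family feasible for $(\sum_j|w_j|^q)^{1/q}$, which says exactly that $\big(\sum_j\rho(w_j)^q\big)^{1/q}\le\rho\big((\sum_j|w_j|^q)^{1/q}\big)$, and this transfers to $V$ because $\phi$ commutes with the functional calculus.

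The main obstacle is the triangle inequality for $\rho$, i.e.\ the \emph{subadditive} direction, which is precisely where the linear argument of Raynaud and Tradacete used linearity of the operator to split $\n{Tz_i}$. After reducing to $u,v\ge 0$ by solidity, I take a family feasible for $u+v$ and decompose it. The tool is the standard lattice splitting from the Krivine functional calculus: since $\big(\sum_i|\lambda_i^{1/q}(y_i-y_i')|^q\big)^{1/q}\le u+v$, the degree-one homogeneous maps $c\mapsto c\,u/(u+v)$ and $c\mapsto c\,v/(u+v)$ produce $a_i,b_i\in E$ with $\lambda_i^{1/q}(y_i-y_i')=a_i+b_i$, $\big(\sum_i|a_i|^q\big)^{1/q}\le u$ and $\big(\sum_i|b_i|^q\big)^{1/q}\le v$. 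The nonlinear twist, replacing linearity, is to insert intermediate points $m_i:=y_i'+\lambda_i^{-1/q}b_i$, so that $\lambda_i^{1/q}(y_i-m_i)=a_i$ and $\lambda_i^{1/q}(m_i-y_i')=b_i$: the metric triangle inequality $d(Ty_i,Ty_i')\le d(Ty_i,Tm_i)+d(Tm_i,Ty_i')$ together with the weighted Minkowski inequality bounds $\big(\sum_i\lambda_i d(Ty_i,Ty_i')^q\big)^{1/q}$ by $\rho(u)+\rho(v)$, since $\{(y_i,m_i,\lambda_i)\}$ is feasible for $u$ and $\{(m_i,y_i',\lambda_i)\}$ is feasible for $v$. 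Taking the supremum gives $\rho(u+v)\le\rho(u)+\rho(v)$.

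With $\rho$ established as a lattice norm, the construction closes: $E/\{\rho=0\}$ is a normed lattice, its completion $V$ is a $q$-concave Banach lattice with $M_{(q)}(I_V)=1$, $\phi$ is a lattice homomorphism of norm $\le M$ with dense image, and $S$ is $1$-Lipschitz with $T=S\circ\phi$. Hence the constructed factorization satisfies $\n{\phi}\,M_{(q)}(I_V)\,\Lip(S)\le M\cdot 1\cdot 1 = M$, which matches the lower bound from the ``if'' part and yields the claimed equality $M^{\Lip}_{(q)}(T)=\inf \n{\phi}\cdot M_{(q)}(I_V)\cdot\Lip(S)$.
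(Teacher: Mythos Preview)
Your proof is correct and follows essentially the same route as the paper's: the same gauge $\rho$, the same quotient-and-complete construction of $V$, and the same intermediate-point trick $m_i$ to replace linearity by the metric triangle inequality. The only cosmetic difference is that you describe the key splitting $\lambda_i^{1/q}(y_i-y_i')=a_i+b_i$ via ``degree-one homogeneous maps $c\mapsto c\,u/(u+v)$'', whereas the paper spells this out explicitly through the $C(K)$ representation of the principal ideal $I_{|u|+|v|}$ (which is really what makes those maps meaningful, since $u/(u+v)$ is not an element of $E$); the content is identical.
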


\begin{remark}
	The proof in our case is inspired by the linear one, but as far as we can tell the Lipschitz case does not follow from the linear one.
	Unlike in the case of Theorem \ref{theorem-lipschitz-convexity}, it does not seem to be possible to fully reduce Lipschitz $q$-concave maps to (linear) $q$-concave maps.
	However, comparing Theorem \ref{thm-factorization-lipschitz-q-concave} with  \cite[Thm. 1]{Raynaud-Tradacete} shows that Lipschitz $q$-concave maps are in fact (linear) $q$-concave maps followed by Lipschitz maps.
\end{remark}

\begin{proof}[Proof if Theorem \ref{thm-factorization-lipschitz-q-concave}]
	The case $q=\infty$ is trivial because every Banach lattice is $\infty$-concave, so let us assume that $1 \le q < \infty$.
	The specific construction carried out below, however, has an analogue in the case $q=\infty$.
	
	First, let us assume that such a factorization exists. 
	Consider $u_j,u'_j \in E$. By the positivity of $\phi$, using \cite[Prop. 1.d.9]{Lindenstrauss-Tzafriri-II} and the $q$-concavity of $V$,
	\begin{multline*}
		\bigg(\sum_{j=1}^n  d(Tu_j,Tu'_j)^q\bigg)^{1/q}
		= \bigg(\sum_{j=1}^n  d(S\phi u_j,S\phi u'_j)^q\bigg)^{1/q} \\
		\le \Lip(S) \bigg(\sum_{j=1}^n  \n{\phi u_j - \phi u'_j}_V^q\bigg)^{1/q} \\
		\le \Lip(S) M_{(q)}(I_V) \n{ \bigg(\sum_{j=1}^n |\phi(v_j-v'_j)|^q\bigg)^{1/q} }_V \\
		\le \Lip(S) M_{(q)}(I_V) \n{\phi} \n{ \bigg(\sum_{j=1}^n |v_j-v'_j|^q\bigg)^{1/q} }_E, 
	\end{multline*}
	showing that $T$ is Lipschitz $q$-concave with
	$M_{(q)}^{\Lip}(T) \le \n{\phi} \cdot M_{(q)}(I_V) \cdot \Lip(S)$.
	
	Now suppose that $T$ is Lipschitz $q$-concave.
	Given $u \in E$, define
	$$
	\rho(u) = \sup \left\{ \bigg(\sum_{j=1}^n  \lambda_j d(Tu_j,Tu'_j)^q\bigg)^{1/q} 
	\; : \; \lambda_j\ge0, \; \bigg(\sum_{j=1}^n \lambda_j |u_j-u'_j|^q\bigg)^{1/q} \le |u| \right\}.
	$$
	Note that since
	$$
	\bigg(\sum_{j=1}^n  \lambda_j d(Tu_j,Tu'_j)^q\bigg)^{1/q} \le M_{(q)}^{\Lip}(T)  \n{\bigg(\sum_{j=1}^n \lambda_j |u_j-u'_j|^q\bigg)^{1/q} }_E,
	$$
	we have for all $u,v \in E$
	$$
	d\big(Tu,Tv\big) \le \rho(u-v) \le M_{(q)}^{\Lip}(T) \n{u-v}.
	$$
	Moreover, we claim that $\rho$ is a lattice seminorm on $E$.
	First notice that for any $u,v \in E$ and $\lambda \ge 0$, it is clear that $\rho(\lambda u) = \lambda \rho(u)$ and that $|v| \le |u|$ implies $\rho(v) \le \rho(u)$.
	To prove the triangle inequality, let $u, v \in E$ and $w = |u| + |v|$, and let $I_w \subset E$ be the ideal generated by $w$ in $E$, which is identified with a space $C(K)$ in which $w$ corresponds to the function identically equal to 1 \cite[II.7]{Schaefer}.
	Given $\varepsilon>0$, find $w_1, w_1', \dotsc, w_n,w_n' \in E$ and $\lambda_1, \dotsc, \lambda_n \ge 0$ such that
	$$
	\bigg(\sum_{j=1}^n \lambda_j |w_j-w'_j|^q\bigg)^{1/q} \le |w| \quad\text{and}\quad
	\rho(w) \le \bigg(\sum_{j=1}^n  \lambda_j d(Tw_j,Tw'_j)^q\bigg)^{1/q} + \varepsilon.
	$$
	Since $u,v \in I_w$, they correspond to functions $f, g \in C(K)$ such that $|f(t)| + |g(t)| = 1$ for all $t \in K$. 
	Similarly, each $w_j-w_j'$ corresponds to $h_j \in C(K)$ with $(\sum_{j=1}^n \lambda_j|h_j(t)|^q)^{1/q} \le 1$ for all $t \in K$.
	Let us now consider
	$$
	f_j(t) = h_j(t) f(t), \qquad g_j(t) = h_j(t) g(t)
	$$
	which belong to $C(K)$ and satisfy
	$$
	\bigg(\sum_{j=1}^n \lambda_j |f_j(t)|^q\bigg)^{1/q} \le |f(t)|
	\quad\text{and}\quad
	\bigg(\sum_{j=1}^n \lambda_j |g_j(t)|^q\bigg)^{1/q} \le |g(t)|.
	$$
	This means there are $u_j, v_j \in I_w \subset E$ with $u_j + v_j = w_j-w_j'$ for each $1\le j \le n$ and satisfying
	$$
	\bigg(\sum_{j=1}^n \lambda_j |u_j|^q\bigg)^{1/q} \le |u|
	\quad\text{and}\quad
	\bigg(\sum_{j=1}^n \lambda_j |v_j|^q\bigg)^{1/q} \le |v|.
	$$
	Notice that $w_j-u_j = v_j+w_j'$, and hence
	$$
		d(Tw_j,Tw'_j) \le d\big(Tw_j, T(w_j-u_j)\big) + d\big(T(v_j+w'_j),Tw'_j\big).
	$$
	Then,
	\begin{multline*}
		\rho(u+v) \le \rho(w) \le \bigg(\sum_{j=1}^n  \lambda_j d(Tw_j,Tw'_j)^q\bigg)^{1/q} + \varepsilon \\
		\le \bigg(\sum_{j=1}^n  \lambda_j d\big(Tw_j, T(w_j-u_j)\big)^q\bigg)^{1/q}
		 + \bigg(\sum_{j=1}^n  \lambda_j d\big(T(v_j+w'_j),Tw'_j\big)^q\bigg)^{1/q} + \varepsilon.
	\end{multline*}
	Since $w_j - (w_j-u_j) = u_j$ and $(v_j+w'_j) - w'_j = v_j$, it follows from the definition of $\rho$ that
	\begin{multline*}
		\bigg(\sum_{j=1}^n  \lambda_j d\big(Tw_j, T(w_j-u_j)\big)^q\bigg)^{1/q} \le \rho(u) \qquad \text{and} \\
		\bigg(\sum_{j=1}^n  \lambda_j d\big(T(v_j+w'_j),Tw'_j\big)^q\bigg)^{1/q} \le \rho(v)	
	\end{multline*}
		
	and thus $\rho(u+v) \le \rho(u) + \rho(v) + \varepsilon$.
	Letting $\varepsilon$ go to 0, we have proved that $\rho$ satisfies the triangle inequality.
	
	Let $V$ denote the Banach lattice obtained by completing $E/\rho^{-1}(0)$ with the norm induced by $\rho$, and let $\varphi$ be the quotient map $E \to E/\rho^{-1}(0)$ seen as a map to $V$.
	For $u \in E$ let us define $S(\phi(u)) = T(u)$.
	Since $d\big(Tu,Tv\big) \le \rho(u-v)$ for any $u, v \in E$ the map $S$ is well defined on $E/\rho^{-1}(0)$. Moreover, since $X$ is complete we can extend $S$ to a Lipschitz map $S : V \to X$ such that $\Lip(S) \le 1$ and $T = S \varphi$.
	
	Now consider $\{u_i\}_{i=1}^n$ in $E$, and let $\varepsilon>0$.
	For each $i =1 , \dotsc, n$ there exist $\{v_j^i,w_j^i\}_{j=1}^{k_i}$ in $E$ and nonnegative numbers $\{\lambda_j^i\}_{j=1}^{k_i}$ such that
\begin{multline*}
	\bigg(\sum_{j=1}^{k_i} \lambda_j^i |v_j^i-w^i_j|^q\bigg)^{1/q} \le |u_i|  \qquad \text{and} \\
		\rho(u_i)^q \le \sum_{j=1}^{k_i}  \lambda_j^i d(Tv_j^i,Tw_j^i)^q + \frac{\varepsilon^q}{n}.
\end{multline*}
Adding up, we have that
	$$
	\bigg(\sum_{i=1}^{n} \rho(u_i)^q\bigg)^{1/q} \le 
	\rho\bigg( \bigg( \sum_{i=1}^{n} |u_i|^q\bigg)^{1/q} \bigg) + \varepsilon.
	$$
	Letting $\varepsilon$ go to 0 we conclude that the normed lattice $E/\rho^{-1}(0)$ is $q$-concave with constant 1, and thus so is its completion $V$.
	
	Finally, note that
	$$
	\n{\phi} \cdot M_{(q)}(I_V) \cdot \Lip(S) \le M_{(q)}^{\Lip}(T) \cdot 1 \cdot 1 = M_{(q)}^{\Lip}(T).
	$$ 
\end{proof}

Since the lattice $V$ constructed in the previous proof depends on the map $T : E \to X$ and $q$, we will denote it by $V_{T,q}$ whenever needed.
As pointed out in the proof, $V_{T,q}$ has $q$-concavity constant one. In particular if $E$ is $q$-concave and $T$ is the identity on $E$, then $V_{T,q}$ is the usual lattice renorming of E with $q$-concavity constant one.

The factorization given by the proof of Theorem \ref{theorem-lipschitz-concavity} is maximal in a certain sense, as the following proposition shows. We omit the proof, since it is an easy combination of that of \cite[Prop. 2]{Raynaud-Tradacete} and the ideas we used to prove Theorem \ref{thm-factorization-lipschitz-q-concave}.

\begin{proposition}
	Let $E$ be a Banach lattice, $X$ a complete metric space, $1 \le q \le \infty$ and
	 $T : E \to X$ be a Lipschitz $q$-concave map. Suppose that $T$ factors through a $q$-concave Banach lattice $\tilde{V}$ with factors $A : E \to \tilde{V}$ and $B : \tilde{V} \to X$, such that $A$ is a lattice homomorphism whose image is dense in $\tilde{V}$, $B$ is a Lipschitz map, and $T = B \circ A$. 
Then there is a lattice homomorphism $\varphi : \tilde{V} \to V_{T,q}$ such that
$\phi = \varphi \circ A$ and $S \circ \varphi = B$.

$$
\xymatrix{
E \ar[rr]^{T}\ar[ddr]_{\phi} \ar[dr]^{A} & &X  \\
 &\tilde{V} \ar@{-->}[d]^{\varphi}
 \ar[ru]^{B} & \\
 &V_{T,q} \ar[ruu]_{S}& 
}
$$
\end{proposition}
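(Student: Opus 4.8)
The plan is to define $\varphi$ on the dense subspace $A(E)\subseteq\tilde{V}$ by the only possible formula, $\varphi(Au):=\phi(u)$, and then to extend it by continuity to all of $\tilde{V}$. Here $\phi\colon E\to V_{T,q}$ and $S\colon V_{T,q}\to X$ denote the canonical maps produced in the proof of Theorem~\ref{thm-factorization-lipschitz-q-concave}, and $\rho$ denotes the lattice seminorm constructed there, so that $\n{\phi(u)}_{V_{T,q}}=\rho(u)$ for $u\in E$. The whole argument rests on a single a priori estimate, namely that $\rho(u)\le\Lip(B)\,M_{(q)}(I_{\tilde{V}})\,\n{Au}_{\tilde{V}}$ for every $u\in E$, which compares the seminorm $\rho$ with the norm that $\tilde{V}$ induces on $E$ through $A$.

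To prove this estimate I would fix $u\in E$ together with admissible data $\lambda_j\ge0$ and $u_j,u'_j\in E$ satisfying $\big(\sum_j\lambda_j|u_j-u'_j|^q\big)^{1/q}\le|u|$, and bound the quantity $\big(\sum_j\lambda_j d(Tu_j,Tu'_j)^q\big)^{1/q}$ that enters the definition of $\rho(u)$. First, writing $T=B\circ A$ and using that $B$ is Lipschitz gives $\big(\sum_j\lambda_j d(Tu_j,Tu'_j)^q\big)^{1/q}\le \Lip(B)\big(\sum_j\lambda_j\n{Au_j-Au'_j}_{\tilde{V}}^q\big)^{1/q}$. Next, absorbing each weight $\lambda_j^{1/q}$ into the vector $Au_j-Au'_j$ and invoking the $q$-concavity of $\tilde{V}$ bounds the right-hand side by $\Lip(B)\,M_{(q)}(I_{\tilde{V}})\,\n{\big(\sum_j\lambda_j|Au_j-Au'_j|^q\big)^{1/q}}_{\tilde{V}}$. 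Finally, since $A$ is a lattice homomorphism it commutes with the homogeneous Krivine functional calculus, so $\big(\sum_j\lambda_j|Au_j-Au'_j|^q\big)^{1/q}=A\big(\big(\sum_j\lambda_j|u_j-u'_j|^q\big)^{1/q}\big)\le A|u|=|Au|$ by positivity of $A$; taking norms, the lattice norm of $\tilde{V}$ being monotone, leaves $\n{Au}_{\tilde{V}}$. Taking the supremum over all admissible data yields the claimed inequality.

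With the estimate in hand the rest is routine. It shows first that $Au=0$ forces $\rho(u)=\n{\phi(u)}_{V_{T,q}}=0$, so that $\varphi_0(Au):=\phi(u)$ is a well-defined map on $A(E)$, and second that $\varphi_0$ is bounded with $\n{\varphi_0(Au)}_{V_{T,q}}\le\Lip(B)\,M_{(q)}(I_{\tilde{V}})\,\n{Au}_{\tilde{V}}$. As $A(E)$ is dense in $\tilde{V}$ and $V_{T,q}$ is complete, $\varphi_0$ extends uniquely to a bounded linear map $\varphi\colon\tilde{V}\to V_{T,q}$, and the identity $\phi=\varphi\circ A$ holds on $E$ by construction. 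That $\varphi$ is a lattice homomorphism follows because on the dense set $A(E)$ it intertwines the lattice operations of the lattice homomorphisms $A$ and $\phi$, for instance $\varphi(Au\vee Av)=\varphi(A(u\vee v))=\phi(u\vee v)=\phi(u)\vee\phi(v)=\varphi(Au)\vee\varphi(Av)$, and both the lattice operations and $\varphi$ are continuous. Likewise $S\circ\varphi=B$ holds on $A(E)$, since $S(\varphi(Au))=S(\phi(u))=Tu=B(Au)$, and then on all of $\tilde{V}$ by continuity of both sides.

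The main obstacle is the key estimate, and within it the one genuinely nonlinear point: pulling $A$ outside the expression $\big(\sum_j\lambda_j|\,\cdot\,|^q\big)^{1/q}$. This is exactly where the hypothesis that $A$ is a lattice homomorphism, hence compatible with the Krivine calculus, is indispensable; once it is used, positivity of $A$ together with monotonicity of the lattice norm finish the bound, and the remaining well-definedness, density-extension, and commutation verifications are standard.
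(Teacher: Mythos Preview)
Your proof is correct and is precisely the argument the paper has in mind: the paper omits the proof entirely, noting only that it is ``an easy combination of that of \cite[Prop.~2]{Raynaud-Tradacete} and the ideas we used to prove Theorem~\ref{thm-factorization-lipschitz-q-concave}.'' Your key estimate $\rho(u)\le \Lip(B)\,M_{(q)}(I_{\tilde V})\,\n{Au}_{\tilde V}$---obtained by writing $T=B\circ A$, using the $q$-concavity of $\tilde V$, and the fact that the lattice homomorphism $A$ commutes with the Krivine calculus---is exactly that combination, and the density/extension steps that follow are standard.
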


\begin{remark}
	In \cite[Sec. 3]{Raynaud-Tradacete}, duality relations are established between the lattices constructed in the factorization theorems for $p$-convex and $q$-concave operators. We do not try to do something similar here, since the Lipschitz dual $L^\#$ of a Banach lattice is not necessarily a Banach lattice.
\end{remark}

\subsection{Factorization theorems \`a la Krivine}\label{sec-factorization-theorems-a-la-Krivine}

Once again taking \cite{Raynaud-Tradacete} as a model, the nonlinear Krivine factorization theorem (Theorem \ref{theorem-lipschitz-concavity}) follows easily from the factorization Theorems \ref{thm-factorization-lipschitz-p-convex} and \ref{thm-factorization-lipschitz-q-concave}.

\begin{proof}[Alternative proof of Theorem \ref{theorem-lipschitz-concavity}]
Let $X$, $Y$ be metric spaces with $Y$ complete and $E$ a Banach lattice.
Suppose that $T : X \to E$ is Lipschitz $p$-convex and $S : E \to Y$ is Lipschitz $p$-concave. 
Apply Theorems \ref{thm-factorization-lipschitz-p-convex} and \ref{thm-factorization-lipschitz-q-concave} (more specifically, their proofs) to obtain factorizations
$$
\xymatrix{
X \ar[rr]^{T}\ar[dr]_{\tau} & &E \ar[rr]^{S}\ar[dr]_{\phi} & &Y  \\
 &W \ar[ru]_{\psi}  &  &V \ar[ru]_{\sigma} & \\
}
$$
where $W$ (resp. $V$) is a $p$-convex (resp. $p$-concave) Banach lattice with constant 1, 
$\tau$ and $\sigma$ are Lipschitz maps with constant at most $1$, and
$\psi$ (resp. $\phi$) is a lattice
homomorphism of norm at most $M_{\Lip}^{(p)}(T)$ (resp. $M^{\Lip}_{(p)}(S)$ ).
From \cite[Lemma 17]{Raynaud-Tradacete}, $\phi \circ \psi$ factors through an $L_p(\mu)$ space with the factors being lattice homomorphisms whose norms have product at most
 $M_{\Lip}^{(p)}(T) \cdot M^{\Lip}_{(p)}(S)$.
The conclusion of Theorem \ref{theorem-lipschitz-concavity} is now clear.
\end{proof}

Similarly, Theorems \ref{thm-factorization-lipschitz-p-convex} and \ref{thm-factorization-lipschitz-q-concave} allow us to reduce the following two Lipschitz results to their linear counterparts (namely Remark 10, Prop. 18 and Cor. 19 in \cite{Raynaud-Tradacete}).

\begin{proposition}\label{prop-generalized-krivine}
	Let $X$, $Y$ be metric spaces with $Y$ complete and $E$ a Banach lattice.
	Suppose that $T : X \to E$ is Lipschitz $p$-convex and $S : E \to Y$ is Lipschitz $q$-concave, with $1 \le q < p < \infty$. Then $ST$ factors through a canonical inclusion $i_{p,q} : L_p(\mu) \to L_q(\mu)$. In fact, there is a factorization
	$$
	\xymatrix{
	X \ar[r]^{T}\ar[d]_{T_1} &E \ar[r]^{S} &Y  \\
	L_p(\mu) \ar[rr]^{i_{p,q}} &   & L_q(\mu) \ar[u]_{S_1} & \\
	}
	$$
	with $\Lip(T_1) \le M_{\Lip}^{(p)}(T)$ and $\Lip(S_1) \le M^{\Lip}_{(q)}(S)$.
\end{proposition}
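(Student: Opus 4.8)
The plan is to mirror the alternative proof of Theorem \ref{theorem-lipschitz-concavity} given in Section \ref{sec-factorization-theorems-a-la-Krivine}, replacing the ``$p$-convex followed by $p$-concave'' input with ``$p$-convex followed by $q$-concave,'' and then invoking the appropriate linear Krivine-type result for the case $q < p$. First I would apply the proofs of Theorems \ref{thm-factorization-lipschitz-p-convex} and \ref{thm-factorization-lipschitz-q-concave} to the two given maps. Since $T : X \to E$ is Lipschitz $p$-convex, its proof produces a $p$-convex Banach lattice $W$ (with $p$-convexity constant $1$), a Lipschitz map $\tau : X \to W$ with $\Lip(\tau) \le 1$, and a lattice homomorphism $\psi : W \to E$ of norm at most $M^{(p)}_{\Lip}(T)$, factoring $T = \psi \circ \tau$. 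Likewise, since $S : E \to Y$ is Lipschitz $q$-concave and $Y$ is complete, the proof of Theorem \ref{thm-factorization-lipschitz-q-concave} produces a $q$-concave Banach lattice $V$ (with $q$-concavity constant $1$), a lattice homomorphism $\phi : E \to V$ of norm at most $M^{\Lip}_{(q)}(S)$ with dense image, and a Lipschitz map $\sigma : V \to Y$ with $\Lip(\sigma) \le 1$, factoring $S = \sigma \circ \phi$.

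The core of the argument is then to examine the composite lattice homomorphism $\phi \circ \psi : W \to V$, which is a linear operator from a $1$-$p$-convex Banach lattice into a $1$-$q$-concave Banach lattice, with $q < p$. This is exactly the situation handled by the linear Krivine factorization theorem: such an operator factors through the canonical inclusion $i_{p,q} : L_p(\mu) \to L_q(\mu)$. Invoking the linear result referenced in the excerpt (Prop. 18 / Cor. 19 of \cite{Raynaud-Tradacete}, which is the linear analogue that this proposition is meant to Lipschitz-ify), I obtain linear maps $u : W \to L_p(\mu)$ and $v : L_q(\mu) \to V$ with $\phi \circ \psi = v \circ i_{p,q} \circ u$ and appropriate control on the norms; because $W$ and $V$ are normalized to have convexity/concavity constant $1$, the norms of $u$ and $v$ can be taken to be $1$ (or are absorbed into the constants already tracked).

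Having these pieces, I would assemble the factorization of $ST$. Set $T_1 := u \circ \tau : X \to L_p(\mu)$ and $S_1 := \sigma \circ v : L_q(\mu) \to Y$. Then
$$
S_1 \circ i_{p,q} \circ T_1 = \sigma \circ v \circ i_{p,q} \circ u \circ \tau = \sigma \circ (\phi \circ \psi) \circ \tau = (\sigma \circ \phi) \circ (\psi \circ \tau) = S \circ T,
$$
as desired. For the Lipschitz constants, $T_1$ is a composition of the linear map $u$ (of norm $1$) with the Lipschitz map $\tau$, so $\Lip(T_1) \le \Lip(\tau) \cdot \n{u} \le M^{(p)}_{\Lip}(T)$ once the norm of $\psi$ is accounted for in the factorization of $\phi\circ\psi$; similarly $\Lip(S_1) \le M^{\Lip}_{(q)}(S)$, tracking the norm of $\phi$ through $v$.

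The main obstacle will be bookkeeping the constants correctly across the three factorizations so that the final bounds $\Lip(T_1) \le M^{(p)}_{\Lip}(T)$ and $\Lip(S_1) \le M^{\Lip}_{(q)}(S)$ come out cleanly. The subtlety is that the norm of $\psi$ (which carries the factor $M^{(p)}_{\Lip}(T)$) must be distributed onto the $X$-side map $u \circ \tau$, while the norm of $\phi$ (carrying $M^{\Lip}_{(q)}(S)$) lands on the $Y$-side map $\sigma \circ v$; this requires choosing the normalization of the linear Krivine factorization of $\phi \circ \psi$ so that the lattice homomorphism norms split in exactly the right way, which is precisely what the cited linear statement guarantees. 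Beyond this accounting, the argument is a routine concatenation of already-established results, so the only genuine input is the linear factorization of an operator from a $p$-convex to a $q$-concave lattice through $i_{p,q}$.
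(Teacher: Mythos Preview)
Your proposal is correct and follows exactly the approach the paper indicates: apply the proofs of Theorems \ref{thm-factorization-lipschitz-p-convex} and \ref{thm-factorization-lipschitz-q-concave} to reduce to a linear lattice homomorphism $\phi\circ\psi$ from a $1$-$p$-convex lattice to a $1$-$q$-concave lattice, and then invoke the linear Krivine-type result from \cite{Raynaud-Tradacete} (Remark 10/Prop.\ 18) to factor that composite through $i_{p,q}$ with the constants split as required. The paper does not write out the details beyond this reduction, and your account of the constant bookkeeping is accurate.
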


\begin{proposition}
	Let $X$, $Y$ be metric spaces with $Y$ complete and $E$ a Banach lattice, and $1 \le p,q \le \infty$.
	Suppose that $T : X \to E$ is Lipschitz $p$-convex and $S : E \to Y$ is Lipschitz $q$-concave. Then for every $\theta\in(0,1)$,  $ST$ factors through a Banach lattice $U_\theta$ that is $\tfrac{p}{p(1-\theta)+\theta}$-convex and $\frac{q}{1-\theta}$-concave.
\end{proposition}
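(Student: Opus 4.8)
The plan is to follow the template of the alternative proof of Theorem \ref{theorem-lipschitz-concavity} above, using the two Lipschitz factorization theorems to strip away the nonlinear parts and then handing the resulting linear map to the corresponding interpolation result of Raynaud and Tradacete. First I would apply Theorem \ref{thm-factorization-lipschitz-p-convex} (in fact its proof) to the Lipschitz $p$-convex map $T$, obtaining a $p$-convex Banach lattice $W$ with convexity constant $1$, a Lipschitz map $\tau : X \to W$ with $\Lip(\tau) \le 1$, and a lattice homomorphism $\psi : W \to E$ of norm at most $M^{(p)}_{\Lip}(T)$ such that $T = \psi \circ \tau$. Dually, I would apply Theorem \ref{thm-factorization-lipschitz-q-concave} to the Lipschitz $q$-concave map $S$, obtaining a $q$-concave Banach lattice $V$ with concavity constant $1$, a lattice homomorphism $\phi : E \to V$ with dense image and norm at most $M^{\Lip}_{(q)}(S)$, and a Lipschitz map $\sigma : V \to Y$ with $\Lip(\sigma) \le 1$ such that $S = \sigma \circ \phi$.

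The crucial observation is that the composite $\phi \circ \psi : W \to V$ is now genuinely \emph{linear}: as a composition of lattice homomorphisms it is a positive operator, and it maps the $p$-convex lattice $W$ into the $q$-concave lattice $V$. This is exactly the hypothesis of the linear theory, so I would invoke the interpolation result of Raynaud and Tradacete \cite[Cor. 19]{Raynaud-Tradacete}: a positive operator from a $p$-convex lattice to a $q$-concave lattice factors, for each $\theta \in (0,1)$, through a Banach lattice $U_\theta$ that is $\tfrac{p}{p(1-\theta)+\theta}$-convex and $\tfrac{q}{1-\theta}$-concave. These are precisely the exponents obtained by interpolating the reciprocal convexity exponent between $1$ and $\tfrac{1}{p}$ and the reciprocal concavity exponent between $\tfrac{1}{q}$ and $0$, with parameter $\theta$.

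Writing $\phi \circ \psi = B \circ A$ for the resulting factorization, with $A : W \to U_\theta$ and $B : U_\theta \to V$ linear, and splicing the diagrams gives
$$
ST = \sigma \circ \phi \circ \psi \circ \tau = \sigma \circ B \circ A \circ \tau .
$$
Since $\tau, \sigma$ are Lipschitz and $A, B$ are bounded linear, the outer composites $A \circ \tau : X \to U_\theta$ and $\sigma \circ B : U_\theta \to Y$ are Lipschitz, which is the desired factorization of $ST$ through the $\tfrac{p}{p(1-\theta)+\theta}$-convex and $\tfrac{q}{1-\theta}$-concave lattice $U_\theta$.

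I do not expect a serious obstacle here, since all the genuinely lattice-theoretic work is confined to the linear interpolation step and has already been carried out in \cite{Raynaud-Tradacete}; the nonlinearity lives only in the two outer maps $\tau$ and $\sigma$. The only points demanding care are bookkeeping: verifying that $\phi \circ \psi$ really meets the hypotheses of the linear result (positivity, together with the correct convexity of its domain and concavity of its range, all guaranteed by the two factorization theorems), and checking that the boundary cases $p, q \in \{1, \infty\}$ cause no trouble --- but there the relevant convexity or concavity is automatic and the construction degenerates harmlessly.
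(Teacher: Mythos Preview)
Your proposal is correct and follows exactly the approach the paper indicates: the paper does not spell out a proof but simply remarks that Theorems \ref{thm-factorization-lipschitz-p-convex} and \ref{thm-factorization-lipschitz-q-concave} reduce this statement to its linear counterpart in \cite{Raynaud-Tradacete} (specifically Prop.~18 and Cor.~19 there), precisely as in the alternative proof of Theorem \ref{theorem-lipschitz-concavity}. Your write-up makes this reduction explicit and there is nothing to add.
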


\begin{corollary}
	Let $E$ be a Banach lattice, $1 \le p,q \le \infty$, and assume that $T : E \to E$ is both Lipschitz $p$-convex and Lipschitz $q$-concave. Then for each $\theta \in (0,1)$, $T^2$ factors through a $\tfrac{p}{p(1-\theta)+\theta}$-convex and $\frac{q}{1-\theta}$-concave Banach lattice. In particular, if $p>1$ and $q< \infty$ then $T$ factors through a super reflexive Banach lattice.
\end{corollary}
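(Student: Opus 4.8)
The plan is to read $T^2 = T\circ T$ as a composition of a Lipschitz $p$-convex map followed by a Lipschitz $q$-concave one and to invoke the preceding Proposition directly. Concretely, I would view the inner copy of $T$ as a map $T : X \to E$ out of the metric space $X := E$, where it is Lipschitz $p$-convex by hypothesis, and the outer copy as a map $T : E \to Y$ into the metric space $Y := E$, where it is Lipschitz $q$-concave by hypothesis. Since $E$, being a Banach lattice, is a complete metric space, the target $Y = E$ is complete and the Proposition applies to the pair (inner $T$, outer $T$). This yields, for each $\theta \in (0,1)$, a factorization of the composite $T^2$ through a Banach lattice $U_\theta$ that is $\tfrac{p}{p(1-\theta)+\theta}$-convex and $\tfrac{q}{1-\theta}$-concave, which is exactly the first assertion; no further work is needed here beyond checking that the two roles of $E$ (as metric space and as Banach lattice) are the ones the Proposition demands.

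For the ``in particular'' clause I would track the two exponents under the hypotheses $p>1$ and $q<\infty$. Writing $r_\theta := \tfrac{p}{p(1-\theta)+\theta}$ and $s_\theta := \tfrac{q}{1-\theta}$, one checks that $r_\theta > 1$ for every $\theta \in (0,1)$, since $r_\theta > 1$ is equivalent to $(p-1)\theta > 0$, which holds because $p>1$ and $\theta > 0$; and $s_\theta < \infty$ trivially, because $q < \infty$ and $\theta < 1$. Thus $U_\theta$ is a Banach lattice that is $r$-convex for some $r>1$ and $s$-concave for some $s<\infty$. By the classical fact that such a lattice admits an equivalent uniformly convex (and uniformly smooth) lattice renorming, and is therefore super reflexive \cite[Sec.~1.f]{Lindenstrauss-Tzafriri-II}, the lattice $U_\theta$ is super reflexive; fixing any single value of $\theta$ then produces the desired super reflexive factorization of the square.

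The step I expect to require the most care is the passage between $T^2$ and $T$ in the final sentence: the Proposition produces a factorization of the square, not of $T$ itself, and for a general operator the fact that $T^2$ factors through a super reflexive space does not force $T$ to do so. To pin down what is genuinely obtained I would unwind the explicit factors coming from the proofs of Theorems \ref{thm-factorization-lipschitz-p-convex} and \ref{thm-factorization-lipschitz-q-concave}: there $T = \psi\circ R$ with $R$ Lipschitz and $\psi$ a lattice homomorphism out of the $p$-convex lattice $W_{T,p}$, while $T = S\circ\phi$ with $\phi$ a lattice homomorphism into the $q$-concave lattice $V_{T,q}$ and $S$ Lipschitz. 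The composite $\phi\circ\psi : W_{T,p}\to V_{T,q}$ is then a positive operator between a $p$-convex and a $q$-concave lattice, and applying the linear interpolation factorization of Raynaud and Tradacete \cite{Raynaud-Tradacete} to $\phi\circ\psi$ and composing on the outside with $R$ and $S$ reproduces the factorization of $T^2$ through the super reflexive $U_\theta$. This makes transparent that it is precisely the square that factors, and would guide how to phrase (or restrict) the final conclusion so as to match what the argument delivers.
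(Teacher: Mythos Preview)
Your approach is correct and coincides with the paper's: the Corollary is stated without proof, being an immediate specialization of the preceding Proposition (with $X=Y=E$ and $S=T$), exactly as you describe, and the paper explicitly points to Cor.~19 of \cite{Raynaud-Tradacete} as the linear model. Your caution about the final clause is well placed---the argument, here and in the linear precedent, delivers a super reflexive factorization of $T^2$, not of $T$ itself, so the ``$T$'' in the ``in particular'' sentence should be read as $T^2$.
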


A natural question in this context is: if a linear map $T : X \to Y$ between Banach spaces can be factored as a Lipschitz $p$-convex map followed by a Lipschitz $q$-convex one, is there a factorization where the factor maps are in addition linear?
Under certain conditions, the answer is yes.
The following result (and its proof) are in the spirit of
\cite[Thm. 1]{Johnson-Maurey-Schechtman}, \cite[Thm. 2]{Farmer-Johnson-09}, \cite[Thm. 2.1]{Chen-Zheng-12}.

\begin{theorem}\label{thm-equivalence-linear-and-nonlinear}
	Let $T : X \to Y$ be a linear map between a Banach space $X$ and a dual Banach space $Y$, and assume that $T$ admits a factorization $T= T_2 T_1$ where $T_1$ is Lipschitz $p$-convex and $T_2$ is Lipschitz $q$-concave, with $1 \le q < p < \infty$. Then there is also a factorization $T = \tau_2 \tau_1$ where $\tau_1$ is $p$-convex and $\tau_2$ is $q$-concave, and moreover
	$M^{(p)}(\tau_1) \le M_{\Lip}^{(p)}(T_1)$ and $M_{(q)}(\tau_2)\le M^{\Lip}_{(q)}(T_2)$.
\end{theorem}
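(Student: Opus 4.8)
The plan is to first reduce the hypothesis to a concrete Lipschitz factorization through a canonical inclusion $i_{p,q}\colon L_p(\mu)\to L_q(\mu)$, and then to linearize the two outer Lipschitz maps by an invariant-mean (weak$^*$-integration) argument, exploiting that both $L_p(\mu)$ (since $1<p<\infty$) and $Y$ are dual spaces. Concretely, I would apply Proposition~\ref{prop-generalized-krivine} to the composition $T=T_2T_1$ to obtain Lipschitz maps $u\colon X\to L_p(\mu)$ and $v\colon L_q(\mu)\to Y$ with $T=v\circ i_{p,q}\circ u$, $\Lip(u)\le M_{\Lip}^{(p)}(T_1)$ and $\Lip(v)\le M^{\Lip}_{(q)}(T_2)$; here $\mu$ may be taken to be a probability measure, so that $\n{i_{p,q}}\le 1$ and $i_{p,q}$ is weak$^*$-to-weak$^*$ continuous (being the adjoint of the inclusion $L_{q'}(\mu)\hookrightarrow L_{p'}(\mu)$).

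Next I would linearize. The additive groups of $X$ and of $L_q(\mu)$ are abelian, hence amenable, so they carry invariant means. Using an invariant mean $\mathfrak{m}$ on $\ell_\infty(X)$ together with $L_p(\mu)=L_{p'}(\mu)^*$, I would define $\tau_1\colon X\to L_p(\mu)$ by $\pair{\tau_1 x}{g}=\mathfrak{m}_w\,\pair{u(x+w)-u(w)}{g}$ for $g\in L_{p'}(\mu)$; invariance of $\mathfrak{m}$ makes $\tau_1$ additive, and the bound $|\pair{u(x+w)-u(w)}{g}|\le\Lip(u)\n{x}\n{g}$ upgrades additivity to linearity with $\n{\tau_1}\le\Lip(u)$. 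In the same way, using that $Y$ is a dual space and an invariant mean on $\ell_\infty(L_q(\mu))$, I would produce a linear $\tau_2'\colon L_q(\mu)\to Y$ with $\n{\tau_2'}\le\Lip(v)$, and set $\tau_2:=\tau_2'\circ i_{p,q}\colon L_p(\mu)\to Y$. The proposed factors are $\tau_1$ and $\tau_2$.

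The convexity and concavity estimates are then immediate from the lattice structure of the $L_p$-spaces. Since $L_p(\mu)$ is $p$-convex with constant $1$, any bounded linear map into it satisfies $M^{(p)}(\tau_1)\le\n{\tau_1}\le M_{\Lip}^{(p)}(T_1)$. Since $L_q(\mu)$ is $q$-concave with constant $1$ and $\n{i_{p,q}}\le1$, one gets $(\sum_j\n{\tau_2 f_j}_Y^q)^{1/q}\le\n{\tau_2'}\,\n{(\sum_j|f_j|^q)^{1/q}}_{L_q(\mu)}\le\n{\tau_2'}\,\n{(\sum_j|f_j|^q)^{1/q}}_{L_p(\mu)}$, so $M_{(q)}(\tau_2)\le\n{\tau_2'}\le M^{\Lip}_{(q)}(T_2)$.

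The hard part is verifying that these independently constructed linearizations actually compose to $T$, i.e.\ that $\tau_2\circ\tau_1=\tau_2'\circ i_{p,q}\circ\tau_1=T$. This is precisely the gluing difficulty at the heart of the linearization theorems of \cite{Johnson-Maurey-Schechtman}, \cite{Farmer-Johnson-09} and \cite{Chen-Zheng-12}: averaging $u$ and $v$ separately a priori loses the relationship encoded by the nonlinear composite, and indeed one can have $\tau_1 x=0$ while $Tx\neq0$ if the two means are chosen incoherently. The essential leverage is the linearity of $T$: for every $w\in X$ one has $v(i_{p,q}u(x+w))-v(i_{p,q}u(w))=T(x+w)-T(w)=Tx$, so the difference quotients of the composite are constant in the translation variable. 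I would therefore run the two constructions in a coordinated fashion over the single variable $w\in X$, writing $\pair{Tx}{y_*}=\mathfrak{m}_w\pair{v(i_{p,q}u(w)+i_{p,q}(u(x+w)-u(w)))-v(i_{p,q}u(w))}{y_*}$ and matching this, via weak$^*$ density of $Y_*$ in $Y^*$ and weak$^*$-continuity of $i_{p,q}$, with $\pair{\tau_2'(i_{p,q}\tau_1 x)}{y_*}$. Controlling the discrepancy between the true, base-point-dependent direction $i_{p,q}(u(x+w)-u(w))$ and its mean $i_{p,q}\tau_1 x$ — using only that $v$ is Lipschitz — is the genuine obstacle, and resolving it (taking the invariant means to be multiplicative/ultrafilter limits and iterating, exactly as in the cited theorems) is where the bulk of the work lies.
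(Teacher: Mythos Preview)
Your first step --- applying Proposition~\ref{prop-generalized-krivine} to obtain a Lipschitz factorization $T = v \circ i_{p,q} \circ u$ through a canonical inclusion --- matches the paper exactly, and your identification of the convexity/concavity estimates for the linearized factors is correct.

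The gap is precisely where you say it is, and it is not closed by your proposal. You correctly diagnose that linearizing $u$ and $v$ independently via invariant means does not a priori yield $\tau_2' \circ i_{p,q} \circ \tau_1 = T$, and you then defer to ``multiplicative/ultrafilter limits and iterating, exactly as in the cited theorems'' without carrying this out. But the cited papers \cite{Johnson-Maurey-Schechtman}, \cite{Farmer-Johnson-09}, \cite{Chen-Zheng-12} do \emph{not} resolve the gluing by coordinating invariant means; they use G\^ateaux differentiability, and so does the paper. Since $1<p<\infty$, the space $L_p(\mu)$ is reflexive and hence has the Radon--Nikod\'ym property, so (when $X$ is separable) $u$ has a point of G\^ateaux differentiability, which after translation may be taken to be $0$ with $u(0)=0$. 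Setting $u_n(x)=nu(x/n)$ and $v_n(z)=nv(z/n)$, linearity of $T$ gives $v_n\circ i_{p,q}\circ u_n = T$ for \emph{every} $n$, and G\^ateaux differentiability gives \emph{norm} convergence $u_n(x)\to\beta(x)$ to a bounded linear $\beta$. This norm convergence is exactly the leverage your invariant-mean sketch lacks: since all the $v_n$ share the Lipschitz constant $\Lip(v)$, one has $\n{v_n(i_{p,q}u_n(x))-v_n(i_{p,q}\beta(x))}\to0$, so defining $\tilde v(z)$ as the weak$^*$ limit of $v_n(z)$ along a free ultrafilter on $\N$ yields $\tilde v\circ i_{p,q}\circ\beta=T$ automatically. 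One then replaces $\tilde v$ by a linear map agreeing with it on the range of $i_{p,q}\beta$ (where it is already linear), and handles nonseparable $X$ by an ultraproduct over finite-dimensional subspaces.

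By contrast, your invariant-mean construction gives no control on $\n{u(x+w)-u(w)-\tau_1 x}$ for individual $w$ --- only a weak$^*$ average --- and a Lipschitz bound on $v$ cannot convert that weak$^*$ identity into the pointwise equation you need; coordinating the mean over $w\in X$ alone will not make $\tau_2'$ linear on all of $L_q(\mu)$, since the basepoints $i_{p,q}u(w)$ do not form a subgroup. The differentiability route sidesteps this entirely: the single scaling parameter $n$ moves both maps simultaneously while preserving the composite at every stage, and norm convergence on one side suffices to pass to the limit.
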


\begin{proof}
	By Proposition \ref{prop-generalized-krivine}, $T$ can be factored as
	$$
	\xymatrix{
	X \ar[rr]^{T}\ar[d]_{B} &  &Y  \\
	L_p(\mu) \ar[rr]^{i_{p,q}} &   & L_q(\mu) \ar[u]_{A} & \\
	}
	$$
	with $\Lip(B) \le M_{\Lip}^{(p)}(T_1)$ and $\Lip(A) \le M^{\Lip}_{(q)}(T_2)$.
	
	Let's assume first that $X$ is separable. Since $L_p(\mu)$ is reflexive (hence has the Radon-Nikod\'ym property), it follows from \cite[Thm. 6.42]{Benyamini-Lindenstrauss} that $B$ has a point of G\^ateaux differentiability. By translations we can assume that $B$ is G\^ateaux differentiable at 0, and also that $B(0) = 0$. Define $B_n : X \to L_p(\mu)$ and $A_n : L_q(\mu) \to Y$ by
	$B_n(x) = nB(x/n)$, $A_n(z) = nA(z/n)$.
	Since $A \circ i_{p,q} \circ B = T$ is linear, we have that
	$A_n \circ i_{p,q} \circ B_n = T$.
	By the G\^ateaux differentiability condition, there is a linear map $\beta : X \to L_p(\mu)$ such that $\n{ B_n(x) - \beta(x) } \to 0$ for all $x \in X$.
	For each $z \in L_q(\mu)$, let $\tilde{A}(z)$ be the weak$^*$ limit of $(A_n(x))_{n=1}^\infty$ along a fixed free ultrafilter of natural numbers.
	Using the norm convergence of $B_n(x)$ to $\beta(x)$, we deduce that $\tilde{A} \circ i_{p,q} \circ \beta = T$.
	Note that $\n{\beta} \le \Lip(B)$ and $\Lip(\tilde{A}) \le \Lip(A)$.
	Since $\tilde{A}$ is linear on $i_{p,q} \circ \beta(X)$, by \cite[Thm. 6.42]{Benyamini-Lindenstrauss} there is a linear operator $\alpha : L_q(\mu) \to Y$ that coincides with $\tilde{A}$ on $i_{p,q} \circ \beta(X)$ and satisfies $\n{\alpha} \le \Lip(\tilde{A})$.
	The maps $\tau_1 = i_{p,q} \circ \beta$ and $\tau_2 = \alpha$ give the desired factorization.
	
	For a nonseparable Banach space $X$, we can apply the previous argument to the restrictions of $T$ to finite-dimensional subspaces of $X$. Taking an ultraproduct over an ultrafilter on the family of those subspaces, we obtain a factorization
	$$
	\xymatrix{
	X \ar[rr]^{(T_j)_{\mathcal{U}}}\ar[d]_{(\beta_j)_\mathcal{U}} &  &Y_\mathcal{U}  \\
	\big(L_p(\mu_j))_\mathcal{U} \ar[rr]^{(i^j_{p,q})_\mathcal{U}} &   & \big(L_q(\mu_j)\big)_\mathcal{U} \ar[u]_{(\alpha_j)_\mathcal{U}} & \\
	}
	$$
	By taking weak$^*$ limits over the ultrafilter, we can replace $Y_\mathcal{U}$ by $Y$ in the above diagram to get a linear factorization of $T : X \to Y$ through an ultraproduct of the inclusion maps $i^j_{p,q}$.
	By Kakutani's representation theorem, $\big(L_q(\mu_j)\big)_\mathcal{U}$ is lattice isometrically isomorphic to $L_q(\nu)$ for some measure $\nu$. The map $(i^j_{p,q})_\mathcal{U}$ is positive and hence $p$-convex, because $\big(L_p(\mu_j)\big)_\mathcal{U}$is $p$-convex, so by the Maurey/Nikishin factorization theorem $(i^j_{p,q})_\mathcal{U}$ factors through an inclusion $i_{p,q} : L_p(\tilde{\nu}) \to L_q(\tilde{\nu})$, giving the desired result.
\end{proof}

\begin{remark}
When $q=2$ Theorem \ref{thm-equivalence-linear-and-nonlinear} holds for a general Banach space $Y$, because in a Hilbert space all subspaces are 1-complemented.
\end{remark}

\subsection{Factorization for maps that are both Lipschitz $p$-convex and Lipschitz $q$-concave}\label{sec-factorization-theorems-for-both-convex-and-concave}

In  \cite[Sec. 5]{Raynaud-Tradacete} the natural question of factorizations for operators that are both $p$-convex and $q$-concave is studied, and in this section we consider the Lipschitz counterpart.

The motivation is simple: if we have a Lipschitz map $T : E \to F$ between Banach lattices $E$ and $F$ that is both Lipschitz $p$-convex and Lipschitz $q$-concave, we know from Theorems \ref{thm-factorization-lipschitz-p-convex} and \ref{thm-factorization-lipschitz-q-concave} that $T$ can be factored through a $p$-convex Banch lattice and also through a $q$-convex Banach lattice.
Could we have both conditions at the same time?
Note that from the same Theorems, if the map $T$ has a factorization of the form
$$
\xymatrix{
E \ar[r]^{T}\ar[d]_{\phi} &F  \\
E_1 \ar[r]_{R}  &F_1 \ar[u]_{\psi}  
}
$$
where $\phi$ and $\psi$ are positive linear maps, $E_1$ is $q$-concave, $F_1$ is $p$-convex and $R$ is a Lipschitz map, then $T$ is both Lipschitz $p$-convex and Lipschitz $q$-concave.
The question is then: if the map $T : E \to F$ is Lipschitz $p$-convex and Lipschitz $q$-concave, do there exist maps $T_1$ and $T_2$, such that $T = T_1 T_2$, where $T_1$ is Lipschitz $p$-convex and $T_2$ is Lipschitz $q$-concave?

The answer to the corresponding linear question is negative, as shown in \cite[Sec. 5]{Raynaud-Tradacete}, and it turns out that allowing for Lipschitz maps does not change that. Thanks to the following Theorem, the linear examples will also work in the Lipschitz context.

\begin{theorem}\label{thm-nonlinear-implies-linear}
	Let $T : E \to F$ be a linear map between Banach lattices $E$ and $F$, and assume that $T$ admits a factorization $T= T_2 T_1$ where $T_1$ is Lipschitz $q$-concave and $T_2$ is Lipschitz $p$-convex. Then there is also a factorization $T = \tau_2 \tau_1$ where $\tau_1$ is $q$-concave and $\tau_2$ is $p$-convex, and moreover
	$M^{(p)}(\tau_2) \le M_{\Lip}^{(p)}(T_2)$ and $M_{(q)}(\tau_1)\le M^{\Lip}_{(q)}(T_1)$.
\end{theorem}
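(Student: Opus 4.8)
The plan is to mimic the structure of the proof of Theorem \ref{thm-equivalence-linear-and-nonlinear}, but with the roles of convexity and concavity interchanged. The strategy is to first apply the nonlinear machinery already developed to obtain a concrete factorization of $T$ through a linear inclusion between abstract $L_r$ spaces, and then to \emph{linearize} the Lipschitz factor maps using a Gateaux differentiation argument. Concretely, since $T_1 : E \to G$ is Lipschitz $q$-concave and $T_2 : G \to F$ is Lipschitz $p$-convex (where $G$ is the intermediate Banach lattice), I would apply Theorems \ref{thm-factorization-lipschitz-q-concave} and \ref{thm-factorization-lipschitz-p-convex} to factor $T_1$ through a $q$-concave lattice and $T_2$ through a $p$-convex lattice. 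The composition of the two linear connecting maps (a lattice homomorphism with dense image into the $q$-concave lattice, followed by a positive inclusion out of the $p$-convex lattice) should, via a Krivine-type result such as \cite[Lemma 17]{Raynaud-Tradacete}, produce a clean intermediate object through which a Lipschitz core of $T$ factors.

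Next I would perform the linearization. Because $T : E \to F$ is itself linear, the outer Lipschitz factors are constrained: writing $T = B \circ A$ with $A$ a Lipschitz map into a space with the Radon--Nikod\'ym property (an $L_r$ space or the reflexive lattice arising above) and $B$ Lipschitz, I can invoke \cite[Thm. 6.42]{Benyamini-Lindenstrauss} to find a point of Gateaux differentiability. After translating so that $A(0)=0$ and $A$ is Gateaux differentiable at $0$, I would form the blow-ups $A_n(x) = nA(x/n)$ and $B_n = \tfrac1n B(n\,\cdot)$, using that $A_n \circ B_n = T$ remains valid since $T$ is linear. The differentiability forces the inner maps to converge (in norm on one side, weak$^*$ along an ultrafilter on the other) to genuine linear operators $\tau_1, \tau_2$ whose norms do not exceed the Lipschitz constants of the original factors, and on the relevant dense range the limit agrees with the nonlinear map so that $\tau_2 \tau_1 = T$. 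The $q$-concavity of $\tau_1$ and the $p$-convexity of $\tau_2$ are inherited from the lattices $V$ and $W$ through which we factored, with the constant bounds $M_{(q)}(\tau_1) \le M^{\Lip}_{(q)}(T_1)$ and $M^{(p)}(\tau_2) \le M_{\Lip}^{(p)}(T_2)$.

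For the nonseparable case, I would reduce to finite-dimensional subspaces of $E$, run the above on each restriction, and then take an ultraproduct over an ultrafilter on the directed family of such subspaces, afterwards collapsing the ultrapower $F_{\mathcal U}$ back to $F$ by taking weak$^*$ limits (using that $F$, as a dual or via the structure of the target, admits such a projection). Kakutani's representation theorem identifies the ultraproduct of the intermediate lattices as a concrete $p$-convex and $q$-concave space, and positivity of the inclusion maps preserves convexity/concavity under the ultraproduct.

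The main obstacle I anticipate is not the differentiation step itself, which parallels Theorem \ref{thm-equivalence-linear-and-nonlinear}, but rather ensuring that the \emph{order-theoretic} properties survive the linearization. In Theorem \ref{thm-equivalence-linear-and-nonlinear} the target spaces were $L_p$ and $L_q$ spaces, where convexity and concavity are automatic from positivity; here the intermediate space is merely an abstract $p$-convex or $q$-concave lattice, so I must verify that the Gateaux derivative $\tau_1$ really lands as a $q$-concave operator and that the weak$^*$-ultrafilter limit $\tau_2$ is genuinely $p$-convex with the claimed constants --- that is, that taking linear limits of the blow-ups does not destroy the lattice inequalities defining these constants. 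The cleanest route is to arrange, via the factorization Theorems, that $\tau_1$ factors through the $q$-concave $V$ and $\tau_2$ through the $p$-convex $W$ by construction, so that the convexity/concavity is a consequence of the fixed intermediate lattice rather than something one must re-prove for the limit maps; the care lies in checking that the limiting linear maps still pass through these same lattices.
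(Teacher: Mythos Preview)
Your starting point matches the paper's: apply Theorems \ref{thm-factorization-lipschitz-q-concave} and \ref{thm-factorization-lipschitz-p-convex} to obtain
\[
E \xrightarrow{\phi} V \xrightarrow{S} G \xrightarrow{R} W \xrightarrow{\psi} F,
\]
with $\phi$ a lattice homomorphism having dense range into the $q$-concave lattice $V$, $\psi$ an injective lattice homomorphism out of the $p$-convex lattice $W$, and $S,R$ Lipschitz. But from here you take an unnecessarily complicated and ultimately gapped route. The paper observes that $J := RS : V \to W$ is \emph{automatically linear}: since $\psi J \phi = T$ is linear and $\phi$ has dense range while $\psi$ is injective, one checks directly that $J(u+v) = Ju + Jv$ on $\phi(E)$, hence on all of $V$ by density. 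Setting $\tau_1 = J\phi$ and $\tau_2 = \psi$ finishes the proof with no differentiation, no ultraproducts, and no separability reduction.

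Your differentiation strategy has concrete obstacles that the theorem's hypotheses do not cover. To invoke \cite[Thm.~6.42]{Benyamini-Lindenstrauss} you need the target of the inner Lipschitz map to have the Radon--Nikod\'ym property; a generic $q$-concave Banach lattice (e.g.\ $L_1$) does not. Likewise, your weak$^*$-limit step for the outer map requires $F$ to be a dual space, which is not assumed here (it was in Theorem \ref{thm-equivalence-linear-and-nonlinear}, but not in the present statement). Finally, your description of ``the composition of the two linear connecting maps'' is structurally off: $\phi$ and $\psi$ sit at the outer ends of the diagram, not adjacent to each other, so there is no linear piece in the middle to which a Krivine-type lemma like \cite[Lemma 17]{Raynaud-Tradacete} could be applied; the middle is exactly the Lipschitz composite $RS$. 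The sandwiching observation that $RS$ is forced to be linear is the idea you are missing.
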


\begin{proof}
Apply the factorization Theorems \ref{thm-factorization-lipschitz-p-convex} and \ref{thm-factorization-lipschitz-q-concave} to obtain
$$
\xymatrix{
E \ar[rr]^{T_1}\ar[dr]_{\phi} & &G \ar[rr]^{T_2}\ar[dr]_{R} & &F  \\
 &V \ar[ru]_{S}  &  &W \ar[ru]_{\psi} & \\
}
$$
where $G$, $W$ and $V$ are Banach lattices with $W$ (resp. $V$) being $p$-convex (resp. $p$-concave) with constant 1; 
$R$ and $S$ are Lipschitz maps with constant at most $1$, and
$\psi$ (resp. $\phi$) is a lattice
homomorphism of norm at most $M_{\Lip}^{(p)}(T_2)$ (resp. $M^{\Lip}_{(q)}(T_1)$ ) that is injective (resp. has dense range).
Let $J = RS$. We claim that $J$ is a linear map.
Given $u,v \in V$, choose sequences $(u_n)_{n=1}^\infty, (v_n)_{n=1}^\infty$ in $V$ such that $\phi(u_n) \to u$ and $\phi(v_n)\to v$.
Since $\phi$ is linear and continuous, $\phi(u_n+v_n) \to u+v$.
$T$ is also linear, so $\psi J \phi(u_n+v_n) = \psi J \phi(u_n) + \psi J \phi(v_n) = \psi(J\phi(u_n) + J\phi(v_n))$.
We conclude that $\psi J(u+v) = \psi(J(u) + J(v))$, and by the injectivity of $\psi$, $J$ is linear.
Choosing $\tau_1 = J \phi$ and $\tau_2 = \psi$, we get that 
$\tau_1$ is $q$-concave and $\tau_2$ is $p$-convex, and in addition
	$M^{(p)}(\tau_2) \le M_{\Lip}^{(p)}(T_2)$ and $M_{(q)}(\tau_1)\le M^{\Lip}_{(q)}(T_1)$.
\end{proof}

Theorem \ref{thm-nonlinear-implies-linear} and the results from \cite[Sec. 5]{Raynaud-Tradacete} immediately imply the following Corollary.

\begin{corollary}
	\begin{enumerate}[(a)]
		\item Let $T : E \to F$ be a linear map from an $\infty$-convex Banach lattice (an AM-space) $E$ to a $q$-concave Banach lattice $F$ ($q < \infty$). If $T$ can be factored as $T = SR$, with $R$ Lipschitz $q$-concave and $S$ Lipschitz $\infty$-convex, then $T$ is compact.
		\item The formal inclusion $T : C(0,1) \to L_q(0,1$) is Lipschitz $q$-concave and Lipschitz $\infty$-convex, but it does not factor as $T = T1 \circ T2$, with $T_1$ Lipschitz $\infty$-convex, and $T_2$ Lipschitz $q$-concave.
		\item Let $T : E \to F$ be a linear map from a $p$-convex Banach lattice $E$ to a $1$-concave Banach lattice (an AL-space) $F$. If $T$ can be factored as $T = SR$, with $R$ Lipschitz 1-concave and $S$ Lipschitz $p$-convex, then $T$ is compact.
		\item If a lattice homomorphism $T : L_p(0,1) \to Lq(0,1)$ ($1 < q < p < \infty$) can be factored as $T = T_2 \circ T_1$ with $T_1$ Lipschitz $q$-concave and $T_2$ Lipschitz $p$-convex, then $T$ is AM-compact (recall that a linear map $T : E \to X$ between a Banach lattice $E$ and a Banach space $X$ is called \emph{AM-compact} if $T[-x,x]$ is relatively compact for every positive $x \in E$.).
		\item 	For $1 < q < p < \infty$, the formal inclusion $i_{p,q} : L_p(0,1)\to L_q(0,1)$ cannot be factored as $i_{p,q} = T_2 \circ T_1$ with $T_1$ Lipschitz $q$-concave and $T_2$ Lipschitz $p$-convex.
	\end{enumerate}
\end{corollary}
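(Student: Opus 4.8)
The plan is to deduce every part of the corollary from Theorem \ref{thm-nonlinear-implies-linear} together with the corresponding \emph{linear} statements of Raynaud and Tradacete \cite[Sec. 5]{Raynaud-Tradacete}. In each case the only substantive move is to convert the hypothesized Lipschitz factorization of the (linear) map $T$ into a genuine linear factorization via Theorem \ref{thm-nonlinear-implies-linear}, after which the assertion is exactly one of the linear results already in the literature.

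For the positive assertions (a), (c) and (d) I would proceed uniformly. In each case $T$ is a linear map admitting a factorization (written $T = SR$ in (a) and (c), and $T = T_2 \circ T_1$ in (d)) whose inner factor is Lipschitz $q$-concave and whose outer factor is Lipschitz $p$-convex, for the relevant parameters: $p = \infty$ in (a), $q = 1$ in (c), and $1 < q < p < \infty$ in (d). Applying Theorem \ref{thm-nonlinear-implies-linear} with these parameters yields a linear factorization $T = \tau_2 \tau_1$ in which $\tau_1$ is (linearly) $q$-concave and $\tau_2$ is (linearly) $p$-convex. At this point the desired conclusion is precisely one of the linear theorems of \cite[Sec. 5]{Raynaud-Tradacete}: a linear map from an AM-space into a $q$-concave lattice admitting such a factorization is compact (for (a)); its companion with a $p$-convex domain and an AL-space range gives compactness in (c); and a lattice homomorphism $L_p(0,1) \to L_q(0,1)$ admitting such a factorization is AM-compact (for (d)). Thus (a), (c) and (d) follow at once.

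For the negative assertions (b) and (e) I would argue by contradiction using the positive parts just proved. First I would record that both inclusions are linear and genuinely possess the relevant properties: the inclusion $C(0,1) \to L_q(0,1)$ is a positive operator out of the AM-space $C(0,1)$, hence $\infty$-convex, and it maps into the $q$-concave lattice $L_q(0,1)$, hence $q$-concave; since for linear maps $p$-convexity coincides with Lipschitz $p$-convexity and likewise for concavity, it is both Lipschitz $\infty$-convex and Lipschitz $q$-concave. Similarly $i_{p,q}$ is a lattice homomorphism that is both Lipschitz $p$-convex and Lipschitz $q$-concave. Now, if $C(0,1) \to L_q(0,1)$ admitted a factorization as in (b), then its form matches exactly the hypothesis of (a), so (a) would force it to be compact, contradicting the classical non-compactness of this inclusion. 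Likewise, if $i_{p,q}$ admitted a factorization as in (e), then (d) would force it to be AM-compact, contradicting the fact, established in \cite{Raynaud-Tradacete}, that $i_{p,q}$ is not AM-compact.

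The one point demanding genuine care — and where I expect the only (modest) obstacle to lie — is the use of the endpoint cases $p = \infty$ in (a)/(b) and $q = 1$ in (c) of Theorem \ref{thm-nonlinear-implies-linear}. I would verify that the factorization Theorems \ref{thm-factorization-lipschitz-p-convex} and \ref{thm-factorization-lipschitz-q-concave} on which it rests really do cover these endpoints, which they do via the stated $p = \infty$ (resp.\ $q = \infty$) adjustments, so that the linearization step is legitimate there. Beyond this, the work is purely bookkeeping: matching the precise hypotheses of each linear theorem in \cite[Sec. 5]{Raynaud-Tradacete} and invoking the standard facts that the inclusion $C(0,1) \to L_q(0,1)$ is non-compact and that $i_{p,q}$ is not AM-compact.
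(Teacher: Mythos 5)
Your proposal is correct and is exactly the paper's route: the paper derives the corollary as an immediate consequence of Theorem \ref{thm-nonlinear-implies-linear} (linearizing the hypothesized Lipschitz factorizations) combined with the linear results of \cite[Sec. 5]{Raynaud-Tradacete}, with the negative parts (b) and (e) following by contradiction from the positive parts just as you argue. Your additional care about the endpoint cases $p=\infty$ and $q=1$ is sound, since Theorems \ref{thm-factorization-lipschitz-p-convex} and \ref{thm-factorization-lipschitz-q-concave} are stated for the full range $1\le p,q\le\infty$.
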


Even though the na\"ive factorization scheme for linear maps that are both $p$-convex and $q$-concave did not work out, Raynaud and Tradacete were able to prove that if one ``gives up'' a little on the exponents of convexity and concavity involved, one still gets such a factorization \cite[Thm. 15]{Raynaud-Tradacete}.
The following would be a Lipschitz version of that result. 

\begin{question}
	Suppose that a Lipschitz map $T: E \to F$ between Banach lattices
	is Lipschitz $p$-convex and Lipschitz $q$-concave, with $1 < p \le \infty$ and $1 \le q < \infty$.
	Can we find $1 < p_0 < p$ and $q < q_0 < \infty$ so that there is a factorization of $T$ as
	$$
	\xymatrix{
	E \ar[r]^{T}\ar[d]_{\phi} &F  \\
	E_0 \ar[r]_{R}  &F_0 \ar[u]_{\varphi}  
	}
	$$
	where $\phi$ and $\varphi$ are positive linear maps, $E_0$ is $p_0$-convex, $F_0$ is $q_0$-concave and $R$ is a Lipschitz map?
	Moreover: given $\theta\in(0,1)$, can we have $p_0 = \frac{p}{\theta + (1-\theta)p}$ and $q_0 = \frac{q}{1-\theta}$?
\end{question}

\begin{remark}
	The proof from \cite[Thm. 15]{Raynaud-Tradacete} cannot be easily adapted to the Lipschitz context.
	The arguments in that proof are heavily based on complex interpolation because that method works very well for lattices, as pointed out by Calder\'on \cite{Calderon}.
	Complex interpolation, however, is not well suited to work with Lipschitz maps.
	The results available, e.g. the main one from \cite{Bergh}, require strong extra assumptions due to the fact that a Lipschitz function generally does not preserve analyticity. 
\end{remark}

\section*{Acknowledgments}
The author thanks Professor W. B. Johnson for many helpful discussions and suggestions.


\begin{thebibliography}{JMS09}

\bibitem[AE56]{Arens-Eells-56}
Richard~F. Arens and James Eells, Jr., \emph{On embedding uniform and
  topological spaces}, Pacific J. Math. \textbf{6} (1956), 397--403.
  \MR{MR0081458 (18,406e)}

\bibitem[AK06]{Albiac-Kalton}
Fernando Albiac and Nigel~J. Kalton, \emph{Topics in {B}anach space theory},
  Graduate Texts in Mathematics, vol. 233, Springer, New York, 2006.
  \MR{2192298 (2006h:46005)}

\bibitem[Ber84]{Bergh}
J{\"o}ran Bergh, \emph{A nonlinear complex interpolation result}, Interpolation
  spaces and allied topics in analysis ({L}und, 1983), Lecture Notes in Math.,
  vol. 1070, Springer, Berlin, 1984, pp.~45--47. \MR{760473 (85j:47069)}

\bibitem[BL00]{Benyamini-Lindenstrauss}
Yoav Benyamini and Joram Lindenstrauss, \emph{Geometric nonlinear functional
  analysis. {V}ol. 1}, American Mathematical Society Colloquium Publications,
  vol.~48, American Mathematical Society, Providence, RI, 2000. \MR{1727673
  (2001b:46001)}

\bibitem[Cal64]{Calderon}
A.-P. Calder{\'o}n, \emph{Intermediate spaces and interpolation, the complex
  method}, Studia Math. \textbf{24} (1964), 113--190. \MR{0167830 (29 \#5097)}

\bibitem[CZ12]{Chen-Zheng-12}
Dongyang Chen and Bentuo Zheng, \emph{Lipschitz {$p$}-integral operators and
  {L}ipschitz {$p$}-nuclear operators}, Nonlinear Anal. \textbf{75} (2012),
  no.~13, 5270--5282. \MR{2927588}

\bibitem[FJ09]{Farmer-Johnson-09}
Jeffrey~D. Farmer and William~B. Johnson, \emph{Lipschitz {$p$}-summing
  operators}, Proc. Amer. Math. Soc. \textbf{137} (2009), no.~9, 2989--2995.
  \MR{MR2506457}

\bibitem[GK03]{Godefroy-Kalton-03}
G.~Godefroy and N.~J. Kalton, \emph{Lipschitz-free {B}anach spaces}, Studia
  Math. \textbf{159} (2003), no.~1, 121--141, Dedicated to Professor Aleksander
  Pe{\l}czy{\'n}ski on the occasion of his 70th birthday. \MR{MR2030906
  (2004m:46027)}

\bibitem[GLZar]{Godefroy-Lancien-Zizler}
Gilles Godefroy, Gilles Lancien, and Vaclav Zizler, \emph{The non-linear
  geometry of {B}anach spaces after {N}igel {K}alton}, Rocky Mountain J. of
  Math. (To appear).

\bibitem[JMS09]{Johnson-Maurey-Schechtman}
W.~B. Johnson, B.~Maurey, and G.~Schechtman, \emph{Non-linear factorization of
  linear operators}, Bull. Lond. Math. Soc. \textbf{41} (2009), no.~4,
  663--668. \MR{2521361 (2011f:46019)}

\bibitem[Kal04]{Kalton-04}
N.~J. Kalton, \emph{Spaces of {L}ipschitz and {H}\"older functions and their
  applications}, Collect. Math. \textbf{55} (2004), no.~2, 171--217.
  \MR{MR2068975 (2005c:46113)}

\bibitem[Kri74]{Krivine-factorisation-reticules}
J.~L. Krivine, \emph{Th\'eor\`emes de factorisation dans les espaces
  r\'eticul\'es}, S\'eminaire {M}aurey-{S}chwartz 1973--1974: {E}spaces
  {$L^{p}$}, applications radonifiantes et g\'eom\'etrie des espaces de
  {B}anach, {E}xp. {N}os. 22 et 23, Centre de Math., \'Ecole Polytech., Paris,
  1974, p.~22. \MR{0440334 (55 \#13209)}

\bibitem[LT79]{Lindenstrauss-Tzafriri-II}
Joram Lindenstrauss and Lior Tzafriri, \emph{Classical {B}anach spaces. {II}},
  Ergebnisse der Mathematik und ihrer Grenzgebiete [Results in Mathematics and
  Related Areas], vol.~97, Springer-Verlag, Berlin, 1979, Function spaces.
  \MR{540367 (81c:46001)}

\bibitem[RT10]{Raynaud-Tradacete}
Yves Raynaud and Pedro Tradacete, \emph{Interpolation of {B}anach lattices and
  factorization of {$p$}-convex and {$q$}-concave operators}, Integral
  Equations Operator Theory \textbf{66} (2010), no.~1, 79--112. \MR{2591637
  (2011a:46042)}

\bibitem[Sch74]{Schaefer}
Helmut~H. Schaefer, \emph{Banach lattices and positive operators},
  Springer-Verlag, New York-Heidelberg, 1974, Die Grundlehren der
  mathematischen Wissenschaften, Band 215. \MR{0423039 (54 \#11023)}

\bibitem[Wea99]{Weaver}
Nik Weaver, \emph{Lipschitz algebras}, World Scientific Publishing Co. Inc.,
  River Edge, NJ, 1999. \MR{MR1832645 (2002g:46002)}

\end{thebibliography}
\bibliographystyle{amsalpha}
\def\cprime{$'$}
\providecommand{\bysame}{\leavevmode\hbox to3em{\hrulefill}\thinspace}
\providecommand{\MR}{\relax\ifhmode\unskip\space\fi MR }
\providecommand{\MRhref}[2]{%
  \href{http://www.ams.org/mathscinet-getitem?mr=#1}{#2}
}
\providecommand{\href}[2]{#2}

\end{document}